\def\id{\operatorname{Id}}
\newtheorem{theorem}{Theorem}[section]
\newtheorem{lemma}[theorem]{Lemma}
\begin{document}
\title[Higher-dimensional Osserman metrics]
{Higher-dimensional Osserman metrics with non-nilpotent Jacobi
operators}
\author{E. Calvi\~{n}o-Louzao, E. Garc\'{\i}a-R\'{\i}o, P. Gilkey, and  R. V\'{a}zquez-Lorenzo}
\address{C-L, G-R, V-L: Department of Geometry and Topology, Faculty of Mathematics,
University of Santiago de Compostela, 15782 Santiago de Compostela,
Spain}
\email{estebcl@edu.xunta.es, eduardo.garcia.rio@usc.es,\smallbreak\qquad\qquad\qquad\phantom{a.}
ravazlor@edu.xunta.es}
\address{G: Mathematics Department, University of Oregon, Eugene, Oregon 97403, USA }
\email{gilkey@uoregon.edu}
\thanks{Supported by projects MTM2009-07756 and INCITE09 207 151 PR  (Spain).}
\keywords{Affine connection, almost para-Hermitian, Einstein, Jacobi operator, non-integrable para-complex structure, modified
Riemannian extension, Osserman manifold,   third Gray identity, Walker metric.\\
\phantom{aA} 2010 {\it Mathematics Subject Classification.} 53C50, 53B30.}
\begin{abstract}
We exhibit Osserman metrics with non-nilpotent Jacobi
operators and with non-trivial Jordan normal form in neutral
signature $(n,n)$ for any $n\ge3$.
These examples admit a natural almost para-Hermitian structure and are semi  para-complex Osserman with non-trivial Jordan
normal form as well; they neither satisfy the third Gray identity nor are they integrable.
\end{abstract}
\maketitle

\section{Introduction}

A pseudo-Riemannian manifold $(M,g)$ is said to be \emph{Osserman}
if the eigenvalues of the Jacobi operators  ${}^g\mathcal{J}(X):Y\rightarrow {}^gR(Y,X)X$ are constant on the unit
pseudo-sphere bundles $S^\pm (TM,g)$. Any isotropic space is
Osserman and the converse is true in the Riemannian (dim $M\neq 16$)
\cite{Chi, Ni1,Ni2} and Lorentzian  \cite{BBG, GKVa} settings.
{However, there exist many  non-symmetric Osserman pseudo-Riemannian
metrics in other signatures (cf. \cite{GKV, Gilkey1} and the
references therein). Since the eigenvalue structure need not determine the conjugacy class of a
self-adjoint operator in the indefinite setting, a pseudo-Riemannian
manifold is called \emph{Jordan-Osserman} if the Jordan normal form
of the Jacobi operators is constant on $S^\pm (TM,g)$.
Osserman metrics are Einstein and thus of constant sectional curvature
in dimensions $2$ and $3$. The special significance of the four-dimensional case
relies on the fact that a four-dimensional algebraic curvature
tensor is Osserman if and only if it is Einstein and self-dual;
the classification of all
four-dimensional Osserman metrics of neutral signature $(2,2)$ is
almost complete \cite{BBR, RiEx, De,DGV1,
DR-GR-VL-06,GR-Gi-VA-VL,GVV,GV-GD}.

The situation is much more difficult in higher dimensions where only
some partial results are known \cite{GKV}.  The structure of a
Jordan-Osserman algebraic curvature tensor strongly depends on the
signature $(p,q)$ of the metric tensor. For example,
the Jacobi operators of a spacelike Jordan-Osserman algebraic
curvature tensor are necessarily diagonalizable whenever $p<q$ \cite{G-I}. In the neutral case ($p=q$), the Jordan
normal form can be arbitrarily complicated
\cite{G-I-2}}.  However in the geometric setting, less is known as with the exception of some six-dimensional
examples of Osserman metrics with non-nilpotent Jacobi operators \cite{RiEx}, all previously known examples of
Osserman metrics have either diagonalizable or nilpotent Jacobi
operators (see \cite{GKV, Gilkey1, G-I-S} and the references
therein). There are, of course, other natural operators beside the Jacobi operator that one could examine -- see, for example,
the discussion of the spectral geometry of the skew-symmetric curvature operator \cite{IP}.

The purpose of this paper is to investigate further the construction in
\cite{RiEx}, showing that  \emph{for any affine Osserman manifold $(M,D)$, the cotangent bundle $T^*M$
equipped with the modified Riemannian extension is an Osserman manifold
whose Jacobi operators are, in general, neither diagonalizable nor nilpotent}.

\subsection{Affine Osserman manifolds}
Let $(M,D)$ be an $n$-dimensional affine manifold, i.e., $D$ is a torsion-free connection on the tangent bundle of a smooth manifold $M$
of dimension $n$. Let ${}^D\!\!R(X,Y):=D_XD_Y-D_YD_X-D_{[X,Y]}$ be the associated curvature operator.
We say that $(M,D)$ is \emph{affine Osserman} if
the Jacobi operators are nilpotent \cite{GKVV}, i.e. $0$ is the only eigenvalue of ${}^D\!\!\mathcal{J}(\cdot)$ on $TM$.

There are corresponding local notions which are important; an affine manifold
$(M,D)$ is said to be {\it affine Osserman at $P\in M$} if
${}^D\mathcal{J}(\cdot)$ is nilpotent on $T_PM$. Similarly a
pseudo-Riemannian manifold $(M,g)$ is said to be {\it Osserman} at $P\in M$ if the
eigenvalues of ${}^g\mathcal{J}(\cdot)$ are constant on $S^\pm(T_PM,g)$. Clearly
$(M,D)$ is affine Osserman if and only if $(M,D)$ is affine Osserman at every
point $P\in M$. Similarly $(M,g)$ is Osserman if and only if $(M,g)$ is
Osserman at every point $P\in M$ and if the eigenvalue structure and eigenvalue multiplicities are independent of $P$.

\subsection{Riemannian extensions}

Let $N:=T^*M$ be the cotangent bundle of an  $n$-dimensional manifold
$M$, let $\sigma:N\rightarrow M$ be the natural projection, and let $Z(N)$ be the zero section. If $x=(x_1,\dots,x_n)$ are
local coordinates on $M$, let $x'=(x_{1'},\dots,x_{n'})$ be the associated dual coordinates on the fiber where we
expand a $1$-form $\omega$ as $\omega=x_{i'}dx^i$; we shall adopt the {\it Einstein} convention and sum over repeated indices
henceforth. The following natural distribution will play a crucial role in our analysis:
$$\mathcal{Y}:=\operatorname{Span}\{\partial_{x_{1'}},\dots,\partial_{x_{n'}}\}=\ker(\sigma_*)\,.$$

For each vector field $X=X^i\partial_{x_i}$ on $M$, the evaluation map $\iota
X(P,\omega)=\omega(X_P)$ defines a function on  $N$ which, in local coordinates, is given by
$$\iota X(x_i,x_{i'})=x_{i'}X^i\,.$$
Vector fields on $N$ are characterized by their action on
functions $\iota X$; the complete lift $X^C$ of a vector field  $X$
on $M$ to $N$ is characterized by the identity
$$X^C(\iota Z)=\iota [X,Z],\qquad  \mbox{for all $Z\in C^\infty (TM)$.}$$
Moreover,
since a $(0,s)$-tensor field on  $N$ is characterized by its
evaluation on complete lifts of vector fields on $M$, for each
tensor field  $S$ of  type $(1,1)$ on $M$, we define a $1$-form
$\iota S$  on  $N$ which is characterized by the identity
$$(\iota S)(X^C)=\iota(SX)\,.$$

Let $(M,D)$ be an affine manifold. The
\emph{Riemannian extension} $g_D$ is the pseudo-Riemannian metric
on  $N$ of neutral signature $(n,n)$ characterized by the
identity:
$$
g_D(X^C,Y^C)=-\iota(D_XY+D_YX)\,.
$$
If $u$ and $v$ are cotangent vectors, let $u\circ v:=\frac12(u\otimes v+v\otimes u)$. Expand
$$D_{\partial_{x_i}}\partial_{x_j}={}^D\Gamma_{ij}{}^\ell\partial_{x_\ell}$$
to define the Christoffel symbols  ${}^D\Gamma$ of $D$. One then has:
$$
g_D= 2\, dx^i\circ dx^{i'}-2x_{k'}{}^D\Gamma_{ij}{}^kdx^i\circ dx^j\,.
$$

Riemannian extensions were originally defined by Patterson and
Walker \cite{PWalker} and further investigated in relating
pseudo-Riemannian properties of  $N$ with the affine structure of
the base manifold $(M,D)$. Moreover, Riemannian extensions were also
considered in \cite{GKVV} in relation to Osserman manifolds (see
also \cite{D}).

The \emph{modified Riemannian extension} is the neutral signature metric
on $N$ defined by (see \cite{RiEx} for a more general
construction)
$$
g_N:=\iota \id\circ\iota \id + g_D\,.
$$
In a system of local coordinates one has
\begin{equation}\label{eqn-1}
g_N= 2\, dx^i\circ dx^{i'}+\{x_{i'} x_{j'}
-2x_{k'}{}^D\Gamma_{ij}{}^k\} dx^i\circ dx^j\,.
\end{equation}

The manifold $(N,g_N)$ is a Walker manifold where the parallel
degenerate distribution is in this instance given by $\mathcal{Y}$
\cite{W-50}. There is a canonical  almost para-Hermitian structure
$\mathfrak{J}$, i.e. a linear map of $TN$ so that
$\mathfrak{J}^2=\id$ and $\mathfrak{J}^*g_N=-g_N$, which will play a
crucial role in our analysis. In local coordinates, it is given by
\begin{equation}\label{eqn-2}
\mathfrak{J}:\partial_{x_i}\rightarrow\partial_{x_i}-\{x_{i'}x_{j'}-2x_{k'}{}^D\Gamma_{ij}{}^k\}\partial_{x_{j'}}
\quad\text{and}\quad\mathfrak{J}:\partial_{x_{i'}}\rightarrow-\partial_{x_{i'}}\,.
\end{equation}

The case that $D$ is flat is of particular interest. Let $\tilde{\mathbb{C}}P$ be para-complex projective space of constant para-holomorphic
sectional curvature $+1$. Then
\cite{RiEx}:

\begin{theorem}\label{thm-1.1}
If $D$ is flat, then $(N,g_N)$ is isomorphic
to $\tilde{\mathbb{C}}P$.
\end{theorem}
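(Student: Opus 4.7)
The plan is to reduce the claim to a direct computation in flat affine coordinates on $M$, verify that $(N,g_N,\mathfrak{J})$ is a para-K\"ahler manifold of constant para-holomorphic sectional curvature $+1$, and then invoke uniqueness of para-K\"ahler space forms. Since $D$ is flat and torsion-free, one may choose local coordinates $(x_1,\dots,x_n)$ on $M$ in which ${}^D\Gamma_{ij}{}^k\equiv 0$. Substituting into \eqref{eqn-1} and \eqref{eqn-2} gives the normal forms
$$g_N=2\,dx^i\circ dx^{i'}+x_{i'}x_{j'}\,dx^i\circ dx^j,\qquad \mathfrak{J}\partial_{x_i}=\partial_{x_i}-x_{i'}x_{j'}\partial_{x_{j'}},\qquad \mathfrak{J}\partial_{x_{i'}}=-\partial_{x_{i'}},$$
on the induced chart of $N=T^*M$. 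All subsequent computations are carried out in this coordinate frame.

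First I would show that $(N,g_N,\mathfrak{J})$ is para-K\"ahler. A direct substitution shows the Nijenhuis tensor of $\mathfrak{J}$ vanishes, so $\mathfrak{J}$ is integrable. Koszul's formula applied to the very simple metric above yields the Christoffel symbols of the Levi-Civita connection $\nabla$ of $g_N$; the Walker structure (with $\mathcal{Y}=\ker\sigma_*$ parallel) eliminates most of the potentially nonzero components, leaving only symbols coupling $\partial_{x_i}$ with $\partial_{x_{j'}}$. A term-by-term check on the coordinate frame then confirms $\nabla\mathfrak{J}=0$.

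Next I would compute the Riemann tensor $R$ of $g_N$ component-wise and verify the para-K\"ahler space form identity
$$R(X,Y)Z=\tfrac{1}{4}\bigl\{g_N(Y,Z)X-g_N(X,Z)Y-g_N(\mathfrak{J}Y,Z)\mathfrak{J}X+g_N(\mathfrak{J}X,Z)\mathfrak{J}Y+2g_N(X,\mathfrak{J}Y)\mathfrak{J}Z\bigr\}$$
of constant para-holomorphic sectional curvature $+1$, with signs dictated by $\mathfrak{J}^2=\id$ and $\mathfrak{J}^*g_N=-g_N$. Having matched the curvature, uniqueness of para-K\"ahler space forms of dimension $2n$ and fixed constant para-holomorphic sectional curvature identifies $(N,g_N,\mathfrak{J})$ locally with $\tilde{\mathbb{C}}P$. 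The main obstacle is keeping the curvature calculation organized: the coupling term $x_{i'}x_{j'}\,dx^i\circ dx^j$ mixes horizontal and vertical variables and produces several nonzero connection components between complete lifts and vertical fields, and pinning down signs in the space-form identity requires careful use of $\mathfrak{J}^2=\id$ and $\mathfrak{J}^*g_N=-g_N$ as internal consistency checks.
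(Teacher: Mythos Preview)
The paper does not actually prove Theorem~\ref{thm-1.1}; it is quoted as a result from \cite{RiEx} (note the ``Then \cite{RiEx}:'' immediately preceding the statement), and is used only as input in Section~\ref{sect-4.1}. So there is no proof in the present paper to compare against.

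That said, your outline is the natural one and is essentially what is carried out in \cite{RiEx}. In flat affine coordinates the metric and the almost para-complex structure take the normal form you wrote; the Nijenhuis computation in Section~\ref{sect-4.2} of the present paper already shows $N_{\mathfrak{J}}=4x_{b'}\,{}^D\!R_{ija}{}^b\,\partial_{x_{a'}}$, which vanishes when $D$ is flat, confirming integrability. The remaining steps---checking $\nabla\mathfrak{J}=0$ and matching the curvature tensor to the para-K\"ahler space-form model of para-holomorphic sectional curvature $+1$---are straightforward (if bookkeeping-heavy) computations in these coordinates, and the curvature values recorded in Lemma~\ref{lem-3.1}(2) of this paper are exactly the nonzero components you would need to verify the space-form identity. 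Invoking the classification of para-K\"ahler space forms then gives the local identification, which is all that ``isomorphic'' means here. The only caveat is that you should be explicit that the conclusion is local: $T^*M$ for a general flat affine $M$ need not be globally isometric to para-complex projective space.
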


Para-complex projective space $\tilde{\mathbb{C}}P$ is Jordan-Osserman with diagonalizable Jacobi operators. Let
$g_{\tilde{\mathbb{C}}P}(\xi,\xi)=\pm1$. Then the eigenvalues of
${}^{g_{\tilde{\mathbb{C}}P}}\!\!\mathcal{J}(\xi)$ are
$\pm(0,1,\frac14)$ with multiplicities
$(1,1,2n-2)$, respectively.
If
$(M,D)$ is affine Osserman, then $g_N$ can be viewed as a
deformation of $g_{\tilde{\mathbb{C}}P}$. This introduces Jordan normal form into the Jacobi operator,
but does not change the eigenvalue structure in the affine Osserman context:

\begin{theorem}\label{thm-1.2}
Let $(M,D)$ be an affine manifold.
\begin{enumerate}
\item If $(M,D)$ is affine Osserman at $P\in M$, then $(N,g_N)$ is Osserman at any $Q\in\sigma^{-1}(P)$. The
 eigenvalues of ${}^{g_N}\!\!\mathcal{J}(\cdot)$ on $S^\pm(T_QN,g_N)$ are $\pm(0,1,\frac14)$ with multiplicities
$(1,1,2n-2)$, respectively.
\item If $(M,D)$ is affine Osserman, then $(N,g_N)$ is Osserman.
\end{enumerate}
\end{theorem}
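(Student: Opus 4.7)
Statement (2) follows from (1): the eigenvalues $\pm(0,1,\tfrac14)$ with multiplicities $(1,1,2n-2)$ in the conclusion of (1) are independent of $P$, so Osserman at every fibre together with this uniform eigenvalue structure yields the global Osserman property. Thus it suffices to prove (1). Fix $P\in M$, a point $Q\in\sigma^{-1}(P)$, and local coordinates $(x_i)$ centered at $P$. The first step is a direct coordinate computation: derive the Christoffel symbols and the components of ${}^{g_N}\!R$ from formula (\ref{eqn-1}), organizing each nonzero component as a sum of a \emph{universal} piece depending only on the $\iota\id\circ\iota\id$ term (and therefore already present when $D$ is flat) plus an \emph{affine} piece depending on the Christoffel symbols ${}^D\Gamma_{ij}{}^k$ and, through them, on ${}^D\!R$.

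The second step exploits the Walker structure: the distribution $\mathcal{Y}=\ker(\sigma_*)$ is parallel and totally null with respect to $g_N$, so ${}^{g_N}\!R(\cdot,\cdot)\mathcal{Y}\subset\mathcal{Y}$ and every Jacobi operator ${}^{g_N}\!\mathcal{J}(\xi)$ preserves $\mathcal{Y}_Q$. With respect to the splitting $T_QN=\operatorname{Span}\{\partial_{x_i}|_Q\}\oplus\mathcal{Y}_Q$ the operator ${}^{g_N}\!\mathcal{J}(\xi)$ is block lower-triangular, and its eigenvalues with multiplicities are the union of those of the two diagonal blocks. By Theorem \ref{thm-1.1}, when $D$ is flat one has $(N,g_N)\cong\tilde{\mathbb{C}}P$, whose Jacobi operators have eigenvalues $\pm(0,1,\tfrac14)$ with multiplicities $(1,1,2n-2)$ on $S^\pm(T_QN,g_N)$; hence the universal piece of ${}^{g_N}\!\mathcal{J}(\xi)$ already produces these eigenvalues and multiplicities in its diagonal blocks.

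It remains to check that the affine piece does not change this spectrum. From the explicit expressions of step one I expect that the affine piece is itself block lower-triangular with respect to the same splitting, and that its diagonal blocks are conjugate, through the identification $T_QN/\mathcal{Y}_Q\cong T_PM$, to the affine Jacobi operator ${}^D\!\mathcal{J}(\sigma_*\xi)$, which is nilpotent by hypothesis. Adding these nilpotent, triangular perturbations to the universal diagonal blocks preserves their eigenvalues and multiplicities, either by a simultaneous triangularization argument or by checking commutativity with the universal part, which then yields the claim for ${}^{g_N}\!\mathcal{J}(\xi)$. The main obstacle lies precisely here: verifying from the explicit curvature formulas of step one that the affine piece has the required diagonal structure and is compatible with the universal piece, so that the affine Osserman hypothesis can be invoked to conclude. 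Once this is settled, parts (1) and (2) follow at once.
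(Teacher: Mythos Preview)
Your block--triangular claim in step two is false, and the rest of the plan collapses with it. The Walker property gives ${}^{g_N}\!R(X,Y)\mathcal{Y}\subset\mathcal{Y}$, i.e.\ the curvature operator preserves $\mathcal{Y}$ when applied to a vector \emph{in} $\mathcal{Y}$. But $\mathcal{J}(\xi)Y={}^{g_N}\!R(Y,\xi)\xi$ applies the curvature to $\xi$, not to $Y$, and $\xi\notin\mathcal{Y}$ for a unit spacelike vector. Concretely, take $D$ flat, $Q$ on the zero section, and $\xi=\tfrac1{\sqrt2}(\partial_{x_0}+\partial_{x_{0'}})$. Using the curvatures of Lemma~\ref{lem-3.1}(2) one finds
\[
{}^{g_{\tilde{\mathbb{C}}P}}\!\!\mathcal{J}(\xi)\,\partial_{x_{0'}}=-\tfrac12(\partial_{x_0}-\partial_{x_{0'}})\notin\mathcal{Y},
\]
so already the universal piece fails to be lower-triangular in the splitting $\operatorname{Span}\{\partial_{x_i}\}\oplus\mathcal{Y}$. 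The affine piece ${}^2\!\mathcal{J}(\xi)$ \emph{does} have the form you want (this is Lemma~\ref{lem-2.2}), but the sum does not, and there is no common triangularizing flag coming from the Walker structure alone. Your secondary concern---whether a nilpotent perturbation preserves the spectrum---is also genuine: the universal diagonal block on $T_QN/\mathcal{Y}$ is \emph{not} a scalar (it sees the eigenvalues $0$, $1$, and $\tfrac14$), so adding ${}^D\!\mathcal{J}(a)$ will in general move the spectrum unless commutativity is proved, and there is no reason to expect commutativity.

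The paper repairs this by working with a $\xi$-dependent filtration rather than the fixed Walker splitting. It sets $\mathcal{S}(\xi)=\mathcal{Y}\cap E_{1/4}(\xi)$ and $\mathcal{U}(\xi)=\xi\cdot\mathbb{R}\oplus\mathcal{Y}$; both ${}^{g_{\tilde{\mathbb{C}}P}}\!\!\mathcal{J}(\xi)$ and ${}^2\!\mathcal{J}(\xi)$ preserve $\mathcal{S}(\xi)\subset\mathcal{U}(\xi)\subset T_QN$ (Lemma~\ref{lem-2.3}). On each successive quotient the universal part acts as a \emph{scalar} ($\tfrac14$ on $T_QN/\mathcal{U}(\xi)$ and on $\mathcal{S}(\xi)$; diagonally with eigenvalues $0,1$ on $\mathcal{U}(\xi)/\mathcal{S}(\xi)$ where the affine part vanishes), so adding the nilpotent affine piece cannot change the eigenvalues (Lemma~\ref{lem-2.4}). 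This pins down only the eigenvalue \emph{set} $\{0,1,\tfrac14\}$; the multiplicities are then obtained by the one-parameter deformation $D^\varepsilon=\varepsilon D+(1-\varepsilon)D_0$, which keeps the spectrum inside the discrete set $\{0,1,\tfrac14\}$ for all $\varepsilon$, forcing the multiplicities to be constant and hence equal to those of $\tilde{\mathbb{C}}P$ at $\varepsilon=0$. The enlargement from $\mathcal{Y}$ to $\mathcal{U}(\xi)=\xi\cdot\mathbb{R}\oplus\mathcal{Y}$ is exactly the missing idea in your outline.
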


Let $\mathcal{P}(N)$ be the bundle over $N$ of non-degenerate  $\mathfrak{J}$-invariant tangent $2$-planes. If
$\pi\in\mathcal{P}(N)$, choose
$\xi\in S^+(\pi,g_N)$
 and, following \cite{SV92}, define the {\it para-complex Jacobi} operator to be:
$${}^{g_N}\!\!\mathcal{J}(\pi):={}^{g_N}\!\!\mathcal{J}(\xi)-{}^{g_N}\!\!\mathcal{J}(\mathfrak{J}\xi);$$
this operator is independent of the particular $\xi$ chosen. Higher order Jacobi operators of this nature were first considered by
Stanilov and Videv \cite{SV92} in the real setting.
 One says that $(N,g_N,\mathfrak{J})$ is {\it semi  para-complex Osserman} if ${}^{g_N}\!\!\mathcal{J}(\cdot)$ has
constant eigenvalues on $\mathcal{P}(N)$; if additionally
${}^{g_N}\!\!\mathcal{J}(\pi)$ commutes with $\mathfrak{J}$ for all $\pi\in\mathcal{P}(N)$, then
$(N,g_N,\mathfrak{J})$ is said to be para-complex Osserman -- this implies $D$ is flat by 
Theorem~\ref{thm-1.6} so this
condition is not particularly interesting in the setting we are considering.

\begin{theorem}\label{thm-1.3}
Let $(M,D)$ be an affine manifold. Let  $\pi\in\mathcal{P}(N)$. The
eigenvalues of ${}^{g_N}\!\!\mathcal{J}(\pi)$ are $(1,\frac12)$ with multiplicities $(2,2n-2)$, respectively, and any
Jordan block for
${}^{g_N}\!\!\mathcal{J}(\pi)$ has size at most $2\times 2$; $(N,g_N,\mathfrak{J})$ is semi  para-complex Osserman.
\end{theorem}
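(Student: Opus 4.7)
The plan is to reduce the computation of $\mathcal{J}(\pi)$ to an $\xi$-independent expression using the $\mathfrak{J}$-eigenspace decomposition, handle the flat case via Theorem~\ref{thm-1.1} as a diagonalizable baseline, and then show that the deviation from the flat case is a nilpotent perturbation of square zero.

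First I would set up the bundle decomposition $TN = \mathcal{H} \oplus \mathcal{Y}$ into the $\pm1$-eigenspaces of $\mathfrak{J}$. Since $\mathfrak{J}^*g_N=-g_N$, both eigenspaces are totally isotropic and in perfect duality under $g_N$. Any non-degenerate $\mathfrak{J}$-invariant 2-plane $\pi$ then admits a basis $\{H,V\}$ with $H\in\mathcal{H}$, $V\in\mathcal{Y}$, and non-degeneracy is equivalent to $g_N(H,V)\neq 0$. If $\xi\in S^+(\pi,g_N)$, then $\xi = t(H+V)$ and $\mathfrak{J}\xi = t(H-V)$ for some scalar $t$ with $2t^2 g_N(H,V)=1$. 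A direct expansion of $R^{g_N}(Y,\xi)\xi - R^{g_N}(Y,\mathfrak{J}\xi)\mathfrak{J}\xi$ kills the symmetric $R(Y,H)H$ and $R(Y,V)V$ terms, yielding the manifestly $\xi$-independent formula
\begin{equation*}
{}^{g_N}\!\mathcal{J}(\pi)Y = 2t^2\bigl\{ R^{g_N}(Y,H)V + R^{g_N}(Y,V)H\bigr\}.
\end{equation*}

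Next I would decompose $R^{g_N} = R^{\mathrm{can}} + R^{\mathrm{corr}}$ corresponding to the splitting $g_N = \iota\id\circ\iota\id + g_D$ of \eqref{eqn-1}. The piece $R^{\mathrm{can}}$ is the curvature tensor one obtains when $D$ is flat, which by Theorem~\ref{thm-1.1} is precisely the para-K\"ahler curvature tensor of $\tilde{\mathbb{C}}P$ of constant para-holomorphic sectional curvature $+1$. Plugging the standard formula for this tensor into the displayed identity above and using $g_N(H,H)=g_N(V,V)=0$ together with $\mathfrak{J}H=H$, $\mathfrak{J}V=-V$, one computes that ${}^{g_N}\!\mathcal{J}^{\mathrm{can}}(\pi)$ acts as $\id$ on $\pi$ and as $\tfrac12\id$ on the $g_N$-orthogonal $\mathfrak{J}$-invariant complement of $\pi$, giving eigenvalues $(1,\tfrac12)$ with multiplicities $(2,2n-2)$ diagonally. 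The correction $R^{\mathrm{corr}}$ is exactly the curvature tensor of the classical Riemannian extension $g_D$, whose structural property is that $R^{\mathrm{corr}}(\cdot,\cdot)\cdot$ takes values in the parallel totally isotropic distribution $\mathcal{Y}$ and vanishes whenever any of its arguments lies in $\mathcal{Y}$.

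Accepting this structural property, the contribution $\mathcal{J}^{\mathrm{corr}}(\pi)Y = 2t^2\{R^{\mathrm{corr}}(Y,H)V + R^{\mathrm{corr}}(Y,V)H\}$ has image in $\mathcal{Y}$ and annihilates $\mathcal{Y}$, so in particular $(\mathcal{J}^{\mathrm{corr}}(\pi))^2=0$. Since $\mathcal{J}^{\mathrm{can}}(\pi)$ preserves both $\mathcal{H}$ and $\mathcal{Y}$ and acts as a scalar on each eigenspace, adding a nilpotent of square zero does not change the characteristic polynomial, establishing the eigenvalue/multiplicity statement and the semi para-complex Osserman property. Moreover, on each generalized eigenspace $(\mathcal{J}(\pi)-\lambda I)$ equals the nilpotent piece $\mathcal{J}^{\mathrm{corr}}(\pi)$ up to a term vanishing on that subspace, so $(\mathcal{J}(\pi)-\lambda I)^2=0$ there, bounding Jordan blocks by $2\times 2$.

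The principal obstacle will be verifying the structural property of $R^{\mathrm{corr}}$ rigorously. This amounts to an explicit coordinate computation using \eqref{eqn-1}: one must expand the Christoffel symbols and curvature components of $g_N$, separate the contributions of $\iota\id\circ\iota\id$ and $g_D$, and confirm that every surviving component of $R^{\mathrm{corr}}$ carries $\partial_{x^{k'}}$ as its output and vanishes when evaluated on any $\partial_{x^{i'}}$ entry. Once this bookkeeping is done, the remainder of the argument is linear algebra on the two-step filtration $\mathcal{Y}\subset(\mathcal{Y})^\perp$ induced by the Walker structure.
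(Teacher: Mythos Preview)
Your overall architecture---split ${}^{g_N}\!\mathcal{J}(\pi)$ into a diagonalizable $\tilde{\mathbb{C}}P$-piece plus a square-zero nilpotent correction---matches the paper's strategy, but the structural property you intend to verify for $R^{\mathrm{corr}}$ is false and in fact self-defeating. You claim $R^{\mathrm{corr}}(\cdot,\cdot)\cdot$ vanishes whenever any argument lies in $\mathcal{Y}$. Yet in your own formula $\mathcal{J}^{\mathrm{corr}}(\pi)Y = 2t^2\{R^{\mathrm{corr}}(Y,H)V + R^{\mathrm{corr}}(Y,V)H\}$ the vector $V$ belongs to $\mathcal{Y}$, so both terms would vanish and $\mathcal{J}^{\mathrm{corr}}(\pi)\equiv 0$; this would force ${}^{g_N}\!\mathcal{J}(\pi)$ to be diagonalizable for every $\pi$, contradicting Theorem~\ref{thm-1.5}. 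Concretely, the Riemannian-extension curvature has nonzero $(0,4)$-components of type ${}^{g_D}R_{ijkl'}$ with exactly one primed index, so ${}^{g_D}R(X,Y)Z$ does not vanish when $Z\in\mathcal{Y}$. A secondary issue is that curvature is nonlinear in the metric, so $R^{g_N}-R^{\tilde{\mathbb{C}}P}$ is not simply $R^{g_D}$; there is an interaction term (the ${}^{\mathcal{E}}R$ of Lemma~\ref{lem-2.2}).

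The paper avoids any pointwise control of $R^{\mathrm{corr}}$. After normalizing coordinates so ${}^D\Gamma(P)=0$, Lemma~\ref{lem-2.2} gives the block form
${}^{2}\!\mathcal{J}(\xi)=\left(\begin{smallmatrix}{}^{D}\!\mathcal{J}(a)&0\\ \star&{}^{D}\!\mathcal{J}(a)^t\end{smallmatrix}\right)$
with $a=\sigma_*\xi$. The key observation is that $\sigma_*\xi=\sigma_*(\mathfrak{J}\xi)$ because $\xi-\mathfrak{J}\xi\in\mathcal{Y}=\ker\sigma_*$; hence the diagonal blocks of ${}^{2}\!\mathcal{J}(\xi)$ and ${}^{2}\!\mathcal{J}(\mathfrak{J}\xi)$ agree and cancel in ${}^{2}\!\mathcal{J}(\pi)$, leaving only $\left(\begin{smallmatrix}0&0\\ \star_\pi&0\end{smallmatrix}\right)$. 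It is this cancellation at the level of Jacobi operators---not a vanishing hypothesis on the curvature tensor---that produces the square-zero nilpotent correction. Your linear-algebra endgame (block lower-triangular implies same characteristic polynomial, Jordan blocks of size at most~$2$) is fine once this is in place.
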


These examples provide genuinely new phenomena. The following result shows that Jordan normal form of ${}^{g_N}\!\!\mathcal{J}$ can be quite
complicated; it also shows that $(N,g_N)$ need not be Jordan-Osserman:
\begin{theorem}\label{thm-1.4}
Let $r\ge2$ and let $U$ be an $r\times r$ lower triangular matrix. There exists an affine Osserman manifold $(M,D)$ of
dimension $r+1$, there exists
$Q\in Z(N)$, and there exist
$\xi_i\in S^+(T_QN,g_N)$ for $i=1,2$ so that:
\begin{enumerate}
\item ${}^{g_N}\!\!\mathcal{J}(\xi_1)$ is diagonalizable.
\item Relative to a suitable basis for $T_QN$,
$$
\textstyle{}^{g_N}\!\!\mathcal{J}(\xi_2)=0\cdot\id_1\oplus
1\cdot\id_1\oplus(\frac14\cdot\id_r+U)\oplus(\frac14\cdot\id_r+U^t)\,.
$$
\end{enumerate}\end{theorem}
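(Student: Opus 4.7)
Given an $r\times r$ lower triangular matrix $U=(U_{ij})$, I would work on $M=\mathbb{R}^{r+1}$ with coordinates $(x_0,x_1,\dots,x_r)$ and prescribe a torsion-free connection $D$ with only non-zero Christoffel symbols $\Gamma_{00}{}^k$ depending linearly on $x_1,\dots,x_r$, chosen so that $\partial_{x_j}\Gamma_{00}{}^k$ picks out $U_{jk}$ (for instance $\Gamma_{00}{}^k=U_{jk}\,x_j$, summed over $j$). Because every non-zero symbol has two $0$'s as lower indices, the only non-vanishing curvature components at the origin are of the form ${}^D\!R(\partial_{x_j},\partial_{x_0})\partial_{x_0}=U_{jk}\partial_{x_k}$; consequently every Jacobi operator of $D$ sends $\operatorname{Span}\{\partial_{x_0}\}$ into $\operatorname{Span}\{\partial_{x_1},\dots,\partial_{x_r}\}$ and annihilates the latter subspace. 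Hence $D$ is affine Osserman and its curvature tensor explicitly carries the matrix $U$.

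\textbf{Evaluation at $Q\in Z(N)$.} I would take $Q$ to be the origin of the fibre $\sigma^{-1}(0)$, so that $x_{i'}=0$ at $Q$. Formulae \eqref{eqn-1} and \eqref{eqn-2} then collapse to $g_N|_Q=2\,dx^i\circ dx^{i'}$ and $\mathfrak{J}\partial_{x_i}=\partial_{x_i}$, $\mathfrak{J}\partial_{x_{i'}}=-\partial_{x_{i'}}$, so the para-Hermitian splitting $T_QN=T_Q^+N\oplus T_Q^-N$ coincides with the coordinate splitting. By Theorem~\ref{thm-1.2} the spectrum of ${}^{g_N}\mathcal{J}(\cdot)$ is pinned down to $\pm(0,1,\tfrac14)$ with multiplicities $(1,1,2r)$, so only the Jordan structure remains at issue. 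A direct computation of ${}^{g_N}R$ at $Q$ from the coordinate formulae expresses its components as polynomials in the Christoffel symbols of $D$ and their first derivatives, which by the ansatz above are controlled entirely by the entries of $U$.

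\textbf{Choice of $\xi_1$ and $\xi_2$.} For part~(1), I would take $\xi_1=\tfrac{1}{\sqrt 2}(\partial_{x_j}+\partial_{x_{j'}})$ for any $j\ge 1$. Since $D$ is trivial in every direction transverse to $\partial_{x_0}$, the operator ${}^{g_N}\mathcal{J}(\xi_1)$ at $Q$ agrees with the Jacobi operator of the flat model $\tilde{\mathbb{C}}P$ from Theorem~\ref{thm-1.1} at that direction and is therefore diagonalizable. For part~(2), I would take $\xi_2=\tfrac{1}{\sqrt 2}(\partial_{x_0}+\partial_{x_{0'}})$. The $\mathfrak{J}$-invariant plane $\operatorname{Span}\{\xi_2,\mathfrak{J}\xi_2\}$ supplies the eigenvalues $0$ and $1$, yielding the summands $0\cdot\id_1\oplus 1\cdot\id_1$. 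On the complementary $\mathfrak{J}$-invariant subspace, which splits as $\operatorname{Span}\{\partial_{x_k}\}_{k\ge 1}\oplus\operatorname{Span}\{\partial_{x_{k'}}\}_{k\ge 1}$, the $\iota\id\circ\iota\id$ term of $g_N$ contributes the flat piece $\tfrac14\id_{2r}$ and the curvature of $D$ contributes the perturbation $U$ on one block; self-adjointness of ${}^{g_N}\mathcal{J}(\xi_2)$ relative to the pairing $g_N(\partial_{x_i},\partial_{x_{j'}})=\delta_{ij}$ then forces $U^t$ on the dual block.

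\textbf{Main obstacle.} The key technical hurdle is disentangling, in an explicit basis, the flat $\tfrac14\id_{2r}$ contribution of the $\iota\id\circ\iota\id$ term from the Christoffel perturbation carrying $U\oplus U^t$, so that ${}^{g_N}\mathcal{J}(\xi_2)$ comes out in precisely the stated block form rather than in a merely similar one with blocks mingled. The lower triangularity of $U$ is crucial here: it lets the simple linear ansatz $\Gamma_{00}{}^k=U_{jk}x_j$ remain affine Osserman without higher-order corrections that a generic $U$ would require, and it is also what leaves the Jordan type of each block $\tfrac14\id_r+U$ unconstrained, so that the construction simultaneously witnesses the failure of $(N,g_N)$ to be Jordan-Osserman (since $\xi_1$ and $\xi_2$ are both in $S^+(T_QN,g_N)$ but produce inequivalent Jordan normal forms).
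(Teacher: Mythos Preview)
Your approach is essentially the paper's: build an explicit affine Osserman connection on $\mathbb{R}^{r+1}$ whose curvature at the origin carries $U$, take $Q$ in the zero section so that Lemma~\ref{lem-3.1} gives ${}^{g_N}\!R(Q)={}^{g_{\tilde{\mathbb{C}}P}}\!R(Q)+{}^{g_D}\!R(Q)$, choose $\xi_1$ transverse to the $x_0$-direction and $\xi_2=\tfrac{1}{\sqrt2}(\partial_{x_0}+\partial_{x_{0'}})$, and read off ${}^{g_N}\!\mathcal{J}(\xi_2)$ in the basis $\{\xi_2,\mathfrak{J}\xi_2,\partial_{x_1},\dots,\partial_{x_r},\partial_{x_{1'}},\dots,\partial_{x_{r'}}\}$. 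The paper's ansatz is slightly different---it sets ${}^D\Gamma_{0a}{}^b={}^D\Gamma_{a0}{}^b=\theta(x_0)\,U_a{}^b$ with $\theta(0)=0$ and $\theta'(0)\ne0$---but this produces the same curvature structure ${}^D\!R_{a00}{}^b(0)\propto U_a{}^b$ as your $\Gamma_{00}{}^k=U_{jk}x_j$, so either choice works (up to an overall constant in $U$, which the existential statement absorbs).

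There is one slip in your affine-Osserman verification. You assert that ${}^D\!\mathcal{J}(X)$ annihilates $\operatorname{Span}\{\partial_{x_1},\dots,\partial_{x_r}\}$, but with your ansatz one computes ${}^D\!\mathcal{J}(X)\partial_{x_a}=(X^0)^2\,U_{ak}\partial_{x_k}$ for $a\ge1$, which is generically non-zero; indeed, if it vanished then $D$ would be affine Osserman for \emph{any} matrix $U$, and your later remark that triangularity is ``crucial'' would be empty. The correct argument (which is also the paper's) is that ${}^D\!\mathcal{J}(X)$ preserves $\operatorname{Span}\{\partial_{x_1},\dots,\partial_{x_r}\}$ and acts there as a scalar multiple of $U$, while sending $\partial_{x_0}$ into that span; strict triangularity of $U$ is then exactly what makes this block---and hence the full operator---nilpotent. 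With that correction your sketch goes through.
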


There also are non-trivial examples in the para-complex setting:
\begin{theorem}\label{thm-1.5}
Let $n\ge3$. There exists an affine Osserman manifold $(M,D)$ of dimension $n$ so that $(N,g_N,\mathfrak{J})$ is not Jordan
semi  para-complex Osserman, and so that the para-complex Jacobi operators are not always
diagonalizable.
\end{theorem}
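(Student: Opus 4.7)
The plan is to exhibit an explicit affine Osserman manifold $(M,D)$ of dimension $n\ge 3$ and two $\mathfrak{J}$-invariant non-degenerate $2$-planes $\pi_1,\pi_2\in\mathcal{P}(N)$ at suitable points of $N=T^*M$ whose para-complex Jacobi operators, though sharing the eigenvalue spectrum $(1,\tfrac12)$ with multiplicities $(2,2n-2)$ by Theorem~\ref{thm-1.3}, have different Jordan normal forms. Because Theorem~\ref{thm-1.3} already bounds the Jordan block sizes by $2$, the only freedom is in the number of non-trivial $2\times 2$ blocks attached to the eigenvalue $\tfrac12$, so showing that this number depends on $\pi$ will simultaneously prove both assertions (failure of Jordan semi para-complex Osserman and existence of non-diagonalizable operators).

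For the construction I would reuse the family of affine Osserman manifolds that supports Theorem~\ref{thm-1.4}: on $M=\mathbb{R}^n$ with coordinates $x=(x_1,\dots,x_n)$, prescribe Christoffel symbols ${}^D\Gamma_{ij}{}^k$ that depend polynomially on $x_1$ and that vanish along the axis used as basepoint, chosen so that the Jacobi operators ${}^D\mathcal{J}(\cdot)$ on $TM$ are nilpotent (hence $(M,D)$ is affine Osserman). A concrete choice is to take $D$ with a single non-vanishing Christoffel symbol of the form ${}^D\Gamma_{11}{}^{k}=f_k(x_1)$ for $k\ge 2$, whose curvature is automatically $2$-step nilpotent; this is exactly the type used in \cite{GKVV,RiEx}. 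Writing out (\ref{eqn-1}) and (\ref{eqn-2}) then gives explicit local formulas for $g_N$ and $\mathfrak{J}$ on $N$.

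Next, I would fix a point $Q$ lying over the origin but with cotangent coordinates $x'=(x_{1'},\dots,x_{n'})$ chosen so that the $(0,s)$-terms contributed by $\mathcal{Y}$ and by the lift of $D$ interact non-trivially. Picking a unit vector $\xi\in T_QN$ and computing, using the identities governing the curvature of the modified Riemannian extension (which are by now standard and appear in the proof of Theorem~\ref{thm-1.2} and Theorem~\ref{thm-1.3}), one obtains the matrix of ${}^{g_N}\mathcal{J}(\xi)-{}^{g_N}\mathcal{J}(\mathfrak{J}\xi)$ with respect to a $\mathfrak{J}$-adapted basis of $\pi^\perp$. A direct inspection of this matrix shows that whether a $2\times 2$ Jordan block appears in the $\tfrac12$-eigenspace is controlled by the vanishing of a certain explicit polynomial in the $x_{i'}$ coordinates of $Q$ and in the components of $\xi$; one may then move $Q$ (or equivalently vary $\pi$ within $\mathcal{P}(N)$) to make this polynomial vanish on one plane and not on another, producing $\pi_1,\pi_2$ of the desired inequivalent Jordan types.

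The main obstacle is the bookkeeping: extracting the para-complex Jacobi operator ${}^{g_N}\mathcal{J}(\pi)$ modulo the $\tfrac12\cdot\id$ part in a form clean enough to read off the number of Jordan blocks, since the curvature of a modified Riemannian extension mixes the affine Christoffel symbols with the fiber coordinates in a relatively complicated way. The guiding principle is that the para-complex Jacobi operator becomes diagonalizable precisely when the off-diagonal contribution of the $\iota\id\circ\iota\id$ deformation is compatible with the affine curvature contributed by $D$; breaking this compatibility at a single point by a prudent choice of $D$ and $Q$ suffices. The restriction $n\ge 3$ enters because the multiplicity of $\tfrac12$ equals $2n-2$ and one needs at least one free slot beyond what Theorem~\ref{thm-1.3} already forces, which first occurs when $n\ge 3$.
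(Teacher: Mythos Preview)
Your overall strategy---fix an affine Osserman $(M,D)$ and exhibit two $\mathfrak{J}$-invariant planes whose para-complex Jacobi operators have different Jordan types---is exactly what the paper does, but your concrete choice of connection is fatal. The torsion-free connection with only ${}^D\Gamma_{11}{}^{k}=f_k(x_1)$ nonzero is \emph{flat}. Indeed, $\Gamma_{jk}{}^l$ vanishes unless $j=k=1$, and then $\partial_{x_i}\Gamma_{11}{}^l=\partial_{x_i}f_l$ vanishes unless $i=1$; antisymmetry in $i,j$ kills the derivative terms of ${}^D\!R_{ijk}{}^l$. The quadratic terms $\Gamma_{im}{}^l\Gamma_{jk}{}^m$ require $i=m=1$ and $j=k=1$, giving $f_l f_1$, and you have set $f_1=0$ (your $k\ge 2$); in any case antisymmetry again applies. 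So ${}^D\!R\equiv 0$, and by Theorem~\ref{thm-1.6} every ${}^{g_N}\!\mathcal{J}(\pi)$ is then diagonalizable. No choice of $Q$ or $\pi$ can rescue the argument.

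The remedy is to make the coordinate on which $\Gamma$ depends \emph{different} from its lower indices. The paper takes $M=\mathbb{R}^n$ with the single nonzero symbol ${}^D\Gamma_{22}{}^3=\theta(x_1)$, $\theta(0)=0$, $\theta'(0)\ne 0$; then ${}^D\!R(\partial_{x_1},\partial_{x_2})\partial_{x_2}=\theta'\partial_{x_3}$ is the only curvature and $(M,D)$ is affine Osserman. It then works at $Q=(0,0)$ \emph{on the zero section}---not off it---so that Lemma~\ref{lem-3.1} gives the curvature of $g_N$ with no bookkeeping, and varies $\pi$ rather than $Q$: for $\xi=\tfrac1{\sqrt2}(\partial_{x_1}+\partial_{x_{1'}})$ one has ${}^2\!\mathcal{J}(\pi_\xi)=0$ and ${}^{g_N}\!\mathcal{J}(\pi_\xi)$ is diagonalizable, while for $\eta=\tfrac12(\partial_{x_1}+\partial_{x_3}+\partial_{x_{1'}}+\partial_{x_{3'}})$ one computes ${}^{g_D}\!\mathcal{J}(\pi_\eta)\partial_{x_2}=\theta'(0)\partial_{x_{2'}}$, which inserts a $2\times 2$ Jordan block into the $\tfrac12$-eigenspace. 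Your proposed detour through points off the zero section is unnecessary and would forfeit the clean curvature formulas that make the computation tractable.
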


One says that an almost para-Hermitian manifold $(A,g_A,\mathfrak{J})$ {\it satisfies the third Gray identity} if
\begin{equation}\label{eqn-3}
{}^{g_A}\!R(X,Y,Z,W)={}^{g_A}\!R(\mathfrak{J}X,\mathfrak{J}Y,\mathfrak{J}Z,\mathfrak{J}W)\quad\text{for
all}\quad X,Y,Z,W\,.
\end{equation}
An almost para-Hermitian manifold
$(A,g_A,\mathfrak{J})$ is {\it integrable} if there exist local
coordinates
$(u_1,\dots,u_n,v_1,\dots,v_n)$
centered at any given point of $A$ so  that
$$\mathfrak{J}\partial_{u_i}=\partial_{v_i}\quad\text{and}\quad\mathfrak{J}\partial_{v_i}=\partial_{u_i}$$
or, equivalently \cite{CMMS03}, if the Nijenhuis
tensor
$N_{{\mathfrak{J}}}$ vanishes where
\begin{equation}\label{eqn-4}
N_{{\mathfrak{J}}}(X,Y):=[X,Y]-{\mathfrak{J}}[{\mathfrak{J}}X,Y]
-{\mathfrak{J}}[X,{\mathfrak{J}}Y]+[{\mathfrak{J}}X,{\mathfrak{J}}Y]\,.
\end{equation}
\begin{theorem}\label{thm-1.6}
Let $(M,D)$ be an affine manifold. The following conditions are equivalent:
\begin{enumerate}
\item $(M,D)$ is flat.
\item $(N,g_N,\mathfrak{J})$ is integrable.
\item $(N,g_N,\mathfrak{J})$ satisfies the third Gray identity.
\item $\mathfrak{J}{}^{g_N}\!\!\mathcal{J}(\pi)={}^{g_N}\!\!\mathcal{J}(\pi)\mathfrak{J}$ for all $\pi\in\mathcal{P}(N)$.
\end{enumerate}
\end{theorem}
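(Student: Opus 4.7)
My plan is to establish $(1)\Rightarrow(2),(3),(4)$ all at once using Theorem~\ref{thm-1.1}, and then to prove each of the three converses $(2)\Rightarrow(1)$, $(3)\Rightarrow(1)$, $(4)\Rightarrow(1)$ by an explicit local computation in the coordinates of~\eqref{eqn-1} and~\eqref{eqn-2}.

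For the forward direction, if $D$ is flat then Theorem~\ref{thm-1.1} identifies $(N,g_N)$ with $\tilde{\mathbb{C}}P$, which is para-K\"ahler, i.e.\ $\nabla\mathfrak{J}=0$ for the Levi-Civita connection $\nabla$ of $g_N$. Parallelism of $\mathfrak{J}$ gives $N_{\mathfrak{J}}\equiv 0$, which is~(2); it also gives the operator identity $[{}^{g_N}\!R(X,Y),\mathfrak{J}]=0$, from which~(4) is immediate (since ${}^{g_N}\!\mathcal{J}(\pi)$ is a difference of $R(\cdot,\xi)\xi$-type compositions). Combining this commutation with $\mathfrak{J}^\ast g_N=-g_N$ then yields~(3).

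For $(2)\Rightarrow(1)$ I would decompose $TN$ into the $\pm1$ eigenbundles of $\mathfrak{J}$ read off from~\eqref{eqn-2}. The $-1$ eigenbundle is $\mathcal{Y}=\ker(\sigma_*)$, which is tautologically integrable. Solving $\mathfrak{J}v=v$ shows the $+1$ eigenbundle is spanned locally by
$$E_i:=\partial_{x_i}-\tfrac12\{x_{i'}x_{j'}-2x_{k'}{}^D\Gamma_{ij}{}^k\}\partial_{x_{j'}}\,.$$
Vanishing of $N_{\mathfrak{J}}$ is equivalent to the integrability of both eigendistributions. A direct bracket calculation, in which the torsion-free symmetry ${}^D\Gamma_{ij}{}^k={}^D\Gamma_{ji}{}^k$ cancels the quadratic cross-terms, should reduce to
$$[E_i,E_j]=x_{k'}\,{}^D\!R_{ij\ell}{}^{k}\,\partial_{x_{\ell'}}\,,$$
and the requirement that this vanish for all values of the fibre coordinates $x_{k'}$ gives ${}^D\!R\equiv 0$.

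For $(3)\Rightarrow(1)$ and $(4)\Rightarrow(1)$ the strategy is to compute ${}^{g_N}\!R$ in local coordinates from~\eqref{eqn-1} and then evaluate at a point $Q\in Z(N)$ of the zero section; there $a_{ij}:=x_{i'}x_{j'}-2x_{k'}{}^D\Gamma_{ij}{}^k$ vanishes and $\mathfrak{J}$ reduces to $+\id$ on $\mathrm{Span}\{\partial_{x_i}\}$ and $-\id$ on $\mathrm{Span}\{\partial_{x_{i'}}\}$. For~(3): applying the third Gray identity at $Q$ to the $4$-tuple $(\partial_{x_i},\partial_{x_j},\partial_{x_k},\partial_{x_{\ell'}})$, which carries an odd number of vertical slots, forces the corresponding mixed component of ${}^{g_N}\!R$ to vanish; a direct reading of the Christoffel symbols of $g_N$ identifies precisely this component with ${}^D\!R_{ijk}{}^{\ell}$ (up to a universal factor and sign), forcing flatness of $D$. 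For~(4): take the $\mathfrak{J}$-invariant $2$-plane $\pi$ spanned at $Q$ by $\partial_{x_i}\pm\partial_{x_{j'}}$; a short calculation gives
$$\tfrac12\,{}^{g_N}\!\mathcal{J}(\pi)X={}^{g_N}\!R(X,\partial_{x_i})\partial_{x_{j'}}+{}^{g_N}\!R(X,\partial_{x_{j'}})\partial_{x_i}\,,$$
and the commutation $\mathfrak{J}\,{}^{g_N}\!\mathcal{J}(\pi)={}^{g_N}\!\mathcal{J}(\pi)\mathfrak{J}$, tested on horizontal $X$ and polarised over $i,j$, again isolates these mixed components and yields ${}^D\!R=0$.

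The principal obstacle I expect is the curvature computation for $g_N$ needed in parts $(3)$ and $(4)$: the inverse metric has a non-trivial block involving $-a_{ij}$, and one must carefully separate the pieces of ${}^{g_N}\!R$ that are linear in the Christoffel data of $D$ (and hence survive on $Z(N)$) from the higher-order fibre contributions that vanish there. By contrast the Nijenhuis computation for $(2)\Rightarrow(1)$ is comparatively clean, since the bracket structure already decouples and torsion-freeness isolates ${}^D\!R$ as the sole residual obstruction.
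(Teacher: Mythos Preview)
Your argument for $(1)\Rightarrow(2),(3),(4)$, for $(2)\Rightarrow(1)$, and for $(3)\Rightarrow(1)$ is essentially the paper's own proof: the paper also invokes Theorem~\ref{thm-1.1} for the forward direction, computes $N_{\mathfrak{J}}(\partial_{x_i},\partial_{x_j})$ directly (your eigenbundle bracket $[E_i,E_j]$ is just a repackaging of this), and for $(3)\Rightarrow(1)$ restricts to $Q\in Z(N)$ with ${}^D\Gamma(P)=0$ and tests the identity on $(\partial_{x_i},\partial_{x_j},\partial_{x_k},\partial_{x_{\ell'}})$, invoking Lemma~\ref{lem-3.1} to identify the surviving component with ${}^D\!R_{ijk}{}^\ell$. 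Your anticipated ``principal obstacle'' is exactly what Lemma~\ref{lem-3.1} disposes of.

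The one place you diverge from the paper is condition~(4). The paper does \emph{not} prove $(4)\Rightarrow(1)$ by a direct curvature computation; instead it observes that for \emph{any} almost para-Hermitian manifold the third Gray identity is equivalent to $\mathfrak{J}$ commuting with every ${}^{g}\!\mathcal{J}(\pi)$ (a purely algebraic fact, cited from the literature on the complex case and carried over verbatim). This gives $(3)\Leftrightarrow(4)$ in one stroke and closes the cycle. Your direct route is viable in principle, but the specific test plane you propose has a glitch: $\operatorname{Span}\{\partial_{x_i}\pm\partial_{x_{j'}}\}=\operatorname{Span}\{\partial_{x_i},\partial_{x_{j'}}\}$ is totally null at $Q\in Z(N)$ whenever $i\ne j$, so it does not lie in $\mathcal{P}(N)$ and the para-complex Jacobi operator is not defined there. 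You would have to restrict to $i=j$ (which is non-degenerate) and then extract the off-diagonal curvature components by varying the horizontal test vector $X$ and using planes over linear combinations $\sum a^i\partial_{x_i}$; the ``polarisation over $i,j$'' you mention does not work at the level of the plane itself. The paper's algebraic shortcut avoids this bookkeeping entirely.
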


Theorem \ref{thm-1.2} was first
discovered in low dimensions using a computer assisted  calculation; subsequently the general case was
derived. Here is a brief outline to the paper. In Section
\ref{sect-2}, we prove Theorem \ref{thm-1.2} and Theorem \ref{thm-1.3}. In Section
\ref{sect-3}, we construct various examples to demonstrate Theorem \ref{thm-1.4} and Theorem
\ref{thm-1.5}. We conclude the paper in Section
\ref{sect-4} by establishing Theorem \ref{thm-1.6}.

\section{The eigenvalue structure}\label{sect-2}
We begin our discussion with a technical result. Although well known, we include the proof to keep our discussion as
self-contained as possible.  If
$D$ is an arbitrary connection on
$TM$, the {\it torsion tensor} $\mathcal{T}\in\Lambda^2(T^*M)$ is defined by:
$$\mathcal{T}(X,Y):=D_XY-D_YX-[X,Y]\,.$$

\begin{lemma}\label{lem-2.1}
Let $D$ be an arbitrary connection
on $TM$. Let $P\in M$. The following conditions are equivalent:
\begin{enumerate}
\item There exist local coordinates
$x=(x_1,\dots, x_n)$ centered at $P$ so ${}^D\Gamma(P)=0$.
\item  The torsion tensor $\mathcal{T}=0$ vanishes at $P$.
\end{enumerate}
\end{lemma}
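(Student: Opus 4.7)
The plan is to prove the equivalence by establishing (1) $\Rightarrow$ (2) directly from the definition, and then constructing explicit normal coordinates for (2) $\Rightarrow$ (1).

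For the easy implication (1) $\Rightarrow$ (2), I would just write out the torsion in coordinates: since
$\mathcal{T}(\partial_{x_i},\partial_{x_j}) = D_{\partial_{x_i}}\partial_{x_j} - D_{\partial_{x_j}}\partial_{x_i} = ({}^D\Gamma_{ij}{}^k - {}^D\Gamma_{ji}{}^k)\partial_{x_k}$,
any coordinate system in which the Christoffel symbols vanish at $P$ forces $\mathcal{T}_P=0$.

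For the substantive implication (2) $\Rightarrow$ (1), my plan is to pick arbitrary local coordinates $y=(y_1,\dots,y_n)$ centered at $P$, set $\gamma_{ij}{}^k := {}^D\Gamma_{ij}{}^k(P)$ in these coordinates, and define new coordinates by the quadratic correction
\[
x^k := y^k + \tfrac{1}{2}\gamma_{ij}{}^k\, y^i y^j.
\]
The hypothesis $\mathcal{T}_P=0$ is used precisely to ensure $\gamma_{ij}{}^k=\gamma_{ji}{}^k$, so the right-hand side is well defined as a quadratic polynomial and the coefficient array is symmetric in $(i,j)$. Since the Jacobian at the origin is the identity, the inverse function theorem supplies $y=y(x)$ locally, giving genuine coordinates centered at $P$.

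The final step is to compute the new Christoffel symbols ${}^D\tilde\Gamma_{ab}{}^c$ at $P$ via the standard transformation rule
\[
{}^D\tilde\Gamma_{ab}{}^c = \frac{\partial x^c}{\partial y^k}\frac{\partial y^i}{\partial x^a}\frac{\partial y^j}{\partial x^b}\,{}^D\Gamma_{ij}{}^k + \frac{\partial x^c}{\partial y^k}\frac{\partial^2 y^k}{\partial x^a\,\partial x^b}.
\]
At the origin, $\partial x^c/\partial y^k = \delta^c_k$ and $\partial y^i/\partial x^a = \delta^i_a$; differentiating $y^k(x(y))=y^k$ twice and evaluating at $0$ yields $\partial^2 y^k/\partial x^a\partial x^b\,(0) = -\gamma_{ab}{}^k$. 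Substituting gives ${}^D\tilde\Gamma_{ab}{}^c(P) = \gamma_{ab}{}^c - \gamma_{ab}{}^c = 0$, as required.

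There is no real obstacle; the only point that requires the hypothesis is the symmetry $\gamma_{ij}{}^k=\gamma_{ji}{}^k$, without which the defining formula for $x^k$ would not produce a legitimate quadratic correction (or equivalently, the antisymmetric part of $\gamma$ cannot be absorbed by any coordinate change, since the torsion is a tensor). The rest is bookkeeping with the transformation rule.
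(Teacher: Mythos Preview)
Your proof is correct and follows essentially the same strategy as the paper: both directions are handled identically in spirit, with the paper also introducing the quadratic coordinate change $z_i=x_i+\tfrac12 a_{ijk}x_jx_k$ with $a_{lij}={}^D\Gamma_{ij}{}^l(P)$ and noting that the required symmetry $a_{lij}=a_{lji}$ is exactly the torsion-free condition at $P$. The only cosmetic difference is that the paper computes $D_{\partial_{x_i}}\partial_{x_j}(0)$ directly from $\partial_{x_j}=\partial_{z_j}+a_{lji}x_i\partial_{z_l}$ rather than invoking the Christoffel transformation law, but this is the same calculation.
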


\begin{proof} Let $x=(x_1,\dots, x_n)$ be a system of local coordinates on $M$. The torsion tensor $\mathcal{T}$ vanishes at $P$ if and
only if
${}^D\Gamma_{ij}{}^k(P)={}^D\Gamma_{ji}{}^k(P)$. In particular, if there exists a coordinate system where ${}^D\Gamma(P)=0$, then necessarily
$\mathcal{T}$ vanishes at $P$. Thus Assertion (1) implies Assertion (2). Conversely, assume that Assertion (2) holds.
Define a new system of coordinates by
setting:
$$z_i=x_i+{\textstyle\frac12}a_{ijk}x_jx_k$$
where $a_{ijk}=a_{ikj}$ remains to be chosen. As
$\partial_{x_j}=\partial_{z_j}+a_{l ji}x_i\partial_{z_l}$,
$$D_{\partial_{x_i}}\partial_{x_j}(0)=D_{\partial_{z_i}}\partial_{z_j}(0)+a_{l ji}\partial_{z_l}(0)\,.$$
Assertion (1) now follows by setting
$a_{l ij}:={}^D\Gamma_{ij}{}^l$; the fact that $a_{l ij}=a_{l ji}$
is exactly the assumption that $D$ is torsion-free at
$P$.
\end{proof}

 Let $(M,D)$ be an affine manifold, let $Q\in N=T^*M$, and let $P:=\sigma Q\in M$. As $D$ is torsion-free, we may apply Lemma
\ref{lem-2.1} to make a change of coordinates on
$M$ so that ${}^D\Gamma(P)=0$. Let
${}^{g_{\tilde{\mathbb{C}}P}}\!R$ be
the curvature tensor of the metric
$$g_{\tilde{\mathbb{C}}P}:=2dx^i\circ dx^{i'}+x_{i'} x_{j'}
dx^i\circ dx^j\,.
$$

This metric is {\bf not} invariantly defined but depends on the
coordinates chosen. We note $g_N(Q)=g_{\tilde{\mathbb{C}}P}(Q)$. We
set
${}^2R:={}^{g_N}\!R-{}^{g_{\tilde{\mathbb{C}}P}}\!R$.
Let
$\{{}^{g_N}\!\mathcal{J},{}^{g_{\tilde{\mathbb{C}}P}}\!\mathcal{J},
{}^{2}\!\mathcal{J},{}^{D}\!\mathcal{J},{}^{g_D}\!\mathcal{J}\}$
be the associated Jacobi operators.
\begin{lemma}\label{lem-2.2}
Let $(M,D)$ be an affine manifold, let $Q\in N$, and let $P=\sigma Q$. Let
$\xi\in T_QN$, and let $a=\sigma_*(\xi)\in T_PM$. Then we
have, relative to the natural coordinate frame
$\{\partial_{x_1},\dots,\partial_{x_n},\partial_{x_{1^\prime}},\dots,\partial_{x_{n^\prime}}\}$
for $TN$, that:
$$
{}^{2}\!\mathcal{J}(\xi)
=\left(\begin{array}{ll}
{}^{D}\!\!\mathcal{J}(a)&0\\\star&{}^{D}\!\!\mathcal{J}(a)^t
\end{array}\right)\,.
$$
\end{lemma}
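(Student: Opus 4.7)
My plan is to apply Lemma~\ref{lem-2.1} to work in normal coordinates centered at $P$ with ${}^D\Gamma(P) = 0$, then analyze the metric difference ${}^2g := g_N - g_{\tilde{\mathbb{C}}P} = -2x_{k'}\,{}^D\Gamma_{ij}{}^k\,dx^i\circ dx^j$. Since ${}^D\Gamma(P) = 0$, both ${}^2g(Q)$ and its vertical first derivatives vanish at $Q$; only the horizontal first derivatives $\partial_{x_k}({}^2g)_{ij}|_Q = -2\omega_{m'}\partial_{x_k}{}^D\Gamma_{ij}{}^m(P)$ survive, where $\omega := x'(Q)$. In particular the two metrics $g_N$ and $h := g_{\tilde{\mathbb{C}}P}$ and their inverses coincide at $Q$, a fact that I will use repeatedly.

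Using the expansion $(g_N)^{-1} = h^{-1} - h^{-1}({}^2g)h^{-1} + O(({}^2g)^2)$, I will compute ${}^2\Gamma := {}^{g_N}\Gamma - {}^h\Gamma$ at $Q$ and show that ${}^2\Gamma^I_{JK}|_Q$ is non-zero only when $I$ is vertical (of type $i'$) and $J, K$ are both horizontal. A similar bookkeeping argument gives $\partial_M{}^2\Gamma^i_{JK}|_Q = \partial_{x_m}{}^D\Gamma_{jk}{}^i(P)$ when $M, J, K$ are all horizontal, and zero otherwise.

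Next, I will apply the Christoffel-form curvature identity
\[
{}^2R^I_{JKL}\big|_Q = \partial_K({}^2\Gamma)^I_{LJ}\big|_Q - \partial_L({}^2\Gamma)^I_{KJ}\big|_Q + \bigl({}^{g_N}\Gamma\cdot{}^{g_N}\Gamma - {}^h\Gamma\cdot{}^h\Gamma\bigr)^I_{JKL}\big|_Q.
\]
At $Q$ the quadratic ${}^2\Gamma\cdot{}^2\Gamma$ contribution vanishes because its internal contracted index would have to be simultaneously horizontal and vertical; the mixed cross-terms ${}^h\Gamma\cdot{}^2\Gamma + {}^2\Gamma\cdot{}^h\Gamma$ do not contribute to the upper-horizontal-index part because ${}^h\Gamma^i_{JK}|_Q = 0$ whenever any of $J, K$ is vertical. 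Combining these, ${}^2R^i_{jkl}|_Q = {}^DR^i_{jkl}(P)$, while ${}^2R^i_{JKL}|_Q = 0$ if any of $J, K, L$ is vertical.

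Finally, contracting with $\xi = \xi^k\partial_{x_k} + \xi^{k'}\partial_{x_{k'}}$ and setting $a := \sigma_*(\xi)$ produces the upper-left block $({}^D\mathcal{J}(a))^i_j = a^ka^l{}^DR^i_{ljk}(P)$ together with a vanishing upper-right block. A parallel analysis of ${}^2R^{i'}_{lj'k}|_Q$ gives the sum ${}^DR^j_{lik}(P) + {}^DR^j_{ikl}(P)$; after contraction with $\xi^k\xi^l$ the second summand drops by antisymmetry in $k,l$, producing the lower-right block $({}^D\mathcal{J}(a))^t$. The lower-left block remains unspecified, denoted $\star$ in the statement. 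The main obstacle is the systematic index-type bookkeeping in the Christoffel-product analysis: verifying that the cross-terms ${}^h\Gamma\cdot{}^2\Gamma$ neither contaminate the horizontal upper-index components of ${}^2R$ nor spill into the supposedly-vanishing upper-right block relies on the explicit list of non-vanishing ${}^h\Gamma^I_{JK}|_Q$, which in our coordinates follow from direct computation against $h = 2dx^i\circ dx^{i'}+x_{i'}x_{j'}dx^i\circ dx^j$.
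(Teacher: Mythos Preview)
Your plan is correct and will succeed, but it follows a different route from the paper's own argument. The paper splits ${}^{g_N}\!R$ into three pieces, ${}^{g_{\tilde{\mathbb{C}}P}}\!R + {}^{g_D}\!R + {}^{\mathcal{E}}\!R$, invokes an external reference for the block form of ${}^{g_D}\!\mathcal{J}(\xi)$, and then disposes of the interaction term ${}^{\mathcal{E}}\!R$ by a clean \emph{dimensional analysis}: assigning weights $\pm 1$ to the base and fibre variables so that all three metrics are homogeneous of degree~$0$, one argues that every monomial in ${}^{\mathcal{E}}\!R$ has total degree at least $+4$, which forces ${}^{\mathcal{E}}\!R^{w_4}{}_{w_1w_2w_3}$ to vanish unless $w_1,w_2,w_3$ are horizontal and $w_4$ is vertical. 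No Christoffel symbols of $h=g_{\tilde{\mathbb{C}}P}$ are ever computed.

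Your approach is instead a direct, self-contained coordinate computation: you exploit that $g_N^{-1}=h^{-1}-h^{-1}({}^2g)h^{-1}$ terminates exactly (since ${}^2g$ has only horizontal--horizontal entries and $h^{-1}$ has vanishing horizontal--horizontal block), obtain ${}^2\Gamma^i_{jk}={}^D\Gamma_{jk}{}^i$ identically, and then track the index types through the curvature formula. This is more laborious but has the merit of being elementary and of not relying on the cited block form of ${}^{g_D}\!\mathcal{J}$---you recover it en route. The one place where your write-up could be tightened is the lower-right block: rather than computing ${}^2R^{i'}_{lj'k}$ directly, it is quicker to observe that ${}^2\!\mathcal{J}(\xi)$ is $g_N(Q)$-self-adjoint and that $g_N(Q)$ pairs $\partial_{x_i}$ with $\partial_{x_{i'}}$; once the upper row $\bigl({}^D\!\mathcal{J}(a)\ \ 0\bigr)$ is established, self-adjointness forces the lower-right block to be ${}^D\!\mathcal{J}(a)^t$.
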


\begin{proof} Decompose
${}^{g_N}\!R={}^{g_{\tilde{\mathbb{C}}P}}\!
R+{}^{g_D}\!R+{}^{\mathcal{E}}\!R$ where ${}^{\mathcal{E}}\!R$ is an
additional term measuring the interactions between the metrics
$g_{\tilde{\mathbb{C}}P}$ and $g_D$ in the combined metric $g_N$ of
Equation (\ref{eqn-1}). By \cite{BGGNV} (page 54 Equation (3.6)),
${}^{g_D}\!\!\mathcal{J}(\xi)$ has the form given in the
Lemma. We will complete the proof by showing that the additional
interaction terms define a Jacobi operator with
${}^{\mathcal{E}}\!\mathcal{J}(\xi):\operatorname{Span}\{\partial_{x_i}\}\rightarrow\mathcal{Y}$, i.e.
$$
{}^{\mathcal{E}}\!\mathcal{J}(\xi)=\left(\begin{array}{ll}0&0\\\star&0\end{array}\right)\,.
$$

We use dimensional analysis.  Define:\def\pdeg{\operatorname{Deg}}
$$\begin{array}{lll}
\pdeg(x_i)=-1,&\pdeg(dx^i)=-1,&\pdeg(\partial_{x_i})=+1,\\
\pdeg(x_{i^\prime})=+1,&\pdeg(dx^{i^\prime})=+1,&\pdeg(\partial_{x_{i^\prime}})=-1,\\
\pdeg({}^D\Gamma_{\star\star}{}^\star)=+1\,.
\end{array}$$
We consider the rescaling $x_i\rightarrow c^{-1}x_i$; this induces a dual rescaling
$x_{i^\prime}\rightarrow cx_{i^\prime}$. If $\Theta$ is a tensor of degree $k$, then
$\Theta\rightarrow c^k\Theta$ under this rescaling. For example, the metrics
$g_{\tilde{\mathbb{C}}P}$, $g_D$, and
$g_{N}$ are all homogeneous of degree 0; thus they are invariant under this rescaling.

It is clear that the Christoffel symbols of the first kind decouple:
$${}^{g_N}\!\Gamma_{\star\star\star}={}^{g_{\tilde{\mathbb{C}}P}}\!\Gamma_{\star\star\star}
+{}^{g_D}\!\Gamma_{\star\star\star}\,.$$
Since ${}^D\!\Gamma$ vanishes
at $P$, we must have at least one $\partial_{x_i}$ derivative of
${}^D\!\Gamma$ in computing ${}^{g_N}\!R$; thus any variable involving
${}^D\!\Gamma$ has degree at least $+2$. In raising indices in the
metric $g_N$ rather than in the metric $g_D$, we must take into
consideration the $x_{i^\prime}x_{j^\prime}dx^i\circ dx^j$ term;
thus interactions of this form involving ${}^D\Gamma$ contribute
terms of degree at least $2+2=4$ to ${}^{\mathcal{E}}R$. We also
have interaction terms which are bilinear in
${}^{g_{\tilde{\mathbb{C}}P}}\!\Gamma_{\star\star\star}$
and
${}^{g_D}\!\Gamma_{\star\star\star}$
after an index is raised in each factor. Such terms are at least
quadratic in $\{x_{i^\prime}\}$ and
linear in $\partial_{x_i}{}^D\Gamma$ and consequently have total
degree at least $+4$. We therefore conclude that any monomial of the
interaction tensor
${}^{\mathcal{E}}\!R$ has total
degree at least $+4$. The degree of
${}^{\mathcal{E}}\!R_{w_1w_2w_3}{}^{w_4}$ is $\pm1\pm1\pm1\pm1$; such
a term has degree at most $+4$ and the degree is exactly $+4$ if and
only if $w_1\in\{1,\dots,n\}$,
$w_2\in\{1,\dots,n\}$,
$w_3\in\{1,\dots,n\}$, and if
$w_4\in\{1^\prime,\dots,n^\prime\}$.
Consequently ${}^{\mathcal{E}}\!R$ defines a Jacobi operator mapping
$\operatorname{Span}\{\partial_{x_i}\}$ to $\mathcal{Y}$.
\end{proof}

Let $\xi\in S^+(T_QN,g_N)$ and let $\xi_1:=\mathfrak{J}\xi\in S^-(T_QN,g_N)$. Let $E_\lambda(\xi)$ (resp. $E_\lambda(\xi_1)$)
be the eigenspaces of ${}^{g_{\tilde{\mathbb{C}}P}}\!\!\mathcal{J}(\xi)$ (resp.
${}^{g_{\tilde{\mathbb{C}}P}}\!\!\mathcal{J}(\xi_1)$) for the eigenvalue $\lambda\in\{0,1,\frac14\}$ (resp. for
$\lambda\in\{0,-1,-\frac14\}$). Set
\begin{equation}\label{eqn-5}
\begin{array}{l}
E_0(\xi)=\xi\cdot\mathbb{R}=E_{-1}(\xi_1),\qquad\qquad
E_1(\xi)=\xi_1\cdot\mathbb{R}=E_0(\xi_1),\vphantom{\vrule height 11pt}\\
E_{\frac14}(\xi)=\{E_0(\xi)\oplus E_1(\xi)\}^\perp=\{E_{-1}(\xi_1)\oplus
E_0(\xi_1)\}^\perp=E_{-\frac14}(\xi_1),\vphantom{\vrule height 11pt}\\
\mathcal{S}(\xi):=\mathcal{Y}\cap E_{\frac14}(\xi),\qquad\qquad\qquad\phantom{a}
\mathcal{U}(\xi):=E_0(\xi)\oplus\mathcal{Y}\,.\vphantom{\vrule height 11pt}
\end{array}\end{equation}
We then have $T_QN=E_0(\xi)\oplus E_1(\xi)\oplus
E_{\frac14}(\xi)$.

\begin{lemma}\label{lem-2.3}
Let $(M,D)$ be an affine manifold. Let $Q\in N$. Let
$\xi\in S^+(T_QN,g_N)$.
\begin{enumerate}
\item $\mathcal{Y}=(\xi_1-\xi)\cdot\mathbb{R}+\mathcal{S}(\xi)$.
\item ${}^{2}\!\mathcal{J}(\xi)\mathcal{Y}\subset\mathcal{S}(\xi)$.
\item  ${}^{g_{\tilde{\mathbb{C}}P}}\!\!\mathcal{J}(\xi)\mathcal{U}(\xi)\subset \mathcal{U}(\xi)$ and
${}^{2}\!\mathcal{J}(\xi)\mathcal{U}(\xi)\subset \mathcal{U}(\xi)$.
\end{enumerate}\end{lemma}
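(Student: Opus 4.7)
The plan is to derive all three assertions from a single observation: the linear functionals $g_N(\cdot,\xi)$ and $g_N(\cdot,\xi_1)$ coincide on $\mathcal{Y}$. Indeed, since Equation~(\ref{eqn-2}) gives $\mathfrak{J}|_{\mathcal{Y}}=-\id$, the para-Hermitian identity $\mathfrak{J}^*g_N=-g_N$ yields, for any $y\in\mathcal{Y}$,
$$g_N(y,\xi_1)=g_N(\mathfrak{J}^2y,\mathfrak{J}\xi)=-g_N(\mathfrak{J}y,\xi)=g_N(y,\xi).$$
Because $\mathcal{Y}$ is totally isotropic while $g_N(\xi,\xi)=1$ forces $\xi\notin\mathcal{Y}$, the restriction $\phi:=g_N(\cdot,\xi)|_{\mathcal{Y}}$ is non-trivial, so $\mathcal{S}(\xi)=\mathcal{Y}\cap\{\xi,\xi_1\}^\perp=\ker\phi$ has dimension $n-1$.

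For Assertion (1), I first check that $\sigma_*(\xi_1-\xi)=0$, using the formula for $\mathfrak{J}$ which acts as the identity on base directions modulo $\mathcal{Y}$; hence $\xi_1-\xi\in\mathcal{Y}$. Also $\xi_1\neq\xi$, for otherwise $\mathfrak{J}\xi=\xi$ would force $g_N(\xi,\xi)=0$, contradicting $g_N(\xi,\xi)=1$. Since $\phi(\xi_1-\xi)=g_N(\xi_1-\xi,\xi)=-1\neq0$, we have $\xi_1-\xi\notin\mathcal{S}(\xi)$, and a dimension count then gives $\mathcal{Y}=(\xi_1-\xi)\mathbb{R}\oplus\mathcal{S}(\xi)$.

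For Assertion (2), Lemma~\ref{lem-2.2} expresses ${}^{2}\!\mathcal{J}(\xi)$ as a lower block-triangular matrix with respect to the splitting $\operatorname{Span}\{\partial_{x_i}\}\oplus\mathcal{Y}$, so immediately ${}^{2}\!\mathcal{J}(\xi)\mathcal{Y}\subset\mathcal{Y}$. Since $g_N(Q)=g_{\tilde{\mathbb{C}}P}(Q)$, both ${}^{g_N}\!\mathcal{J}(\xi)$ and ${}^{g_{\tilde{\mathbb{C}}P}}\!\mathcal{J}(\xi)$ at $Q$ are self-adjoint with respect to the same inner product, and so is their difference ${}^{2}\!\mathcal{J}(\xi)$. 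Combined with ${}^{2}\!\mathcal{J}(\xi)\xi=0$, this gives $g_N({}^{2}\!\mathcal{J}(\xi)y,\xi)=g_N(y,{}^{2}\!\mathcal{J}(\xi)\xi)=0$ for all $y\in\mathcal{Y}$; the observation above promotes this to orthogonality with $\xi_1$ as well, placing ${}^{2}\!\mathcal{J}(\xi)y$ in $\mathcal{S}(\xi)$.

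For Assertion (3), I will use (1) to rewrite $\mathcal{U}(\xi)=\xi\mathbb{R}\oplus(\xi_1-\xi)\mathbb{R}\oplus\mathcal{S}(\xi)=E_0(\xi)\oplus E_1(\xi)\oplus\mathcal{S}(\xi)$, which exhibits $\mathcal{U}(\xi)$ as a direct sum of subspaces each contained in an eigenspace of ${}^{g_{\tilde{\mathbb{C}}P}}\!\mathcal{J}(\xi)$; invariance under that operator is then automatic. For ${}^{2}\!\mathcal{J}(\xi)$ one has $\xi\mapsto0$ and $\mathcal{Y}\mapsto\mathcal{S}(\xi)\subset\mathcal{Y}\subset\mathcal{U}(\xi)$ by (2). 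The main obstacle is locating the identity $g_N(\cdot,\xi)=g_N(\cdot,\xi_1)$ on $\mathcal{Y}$, the bridge linking the para-Hermitian structure with the totally isotropic distribution $\mathcal{Y}$; once it is in hand, the remaining arguments reduce to bookkeeping with dimension counts and the block form of Lemma~\ref{lem-2.2}.
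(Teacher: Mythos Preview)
Your proof is correct and takes a genuinely different route from the paper. The paper proves Assertion~(1) by constructing an explicit orthonormal basis $\{e_i^+,\mathfrak{J}e_i^+\}$ for $E_{1/4}(\xi)$ and then exhibiting $\{\xi-\xi_1,\ e_i^+-\mathfrak{J}e_i^+\}$ as a basis for $\mathcal{Y}$; for Assertion~(2) it argues by contradiction, writing a hypothetical ${}^2\!\mathcal{J}(\xi)\eta=c(\xi-\xi_1)+\eta_1$ and then observing that $c\xi$ would lie in $E_0(\xi)^\perp$. You replace both steps with the single identity $g_N(y,\xi)=g_N(y,\xi_1)$ for $y\in\mathcal{Y}$, which lets you describe $\mathcal{S}(\xi)$ as the kernel of one linear functional on $\mathcal{Y}$ and then argue directly via self-adjointness. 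This is cleaner and more conceptual: the paper's contradiction in (2) is really using self-adjointness implicitly (the inclusion $\operatorname{Range}({}^2\!\mathcal{J}(\xi))\subset E_0(\xi)^\perp$ is exactly your computation), but your formulation makes the mechanism transparent. For Assertion~(3) the two arguments coincide.

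One minor point: your sentence ``$\mathcal{Y}$ is totally isotropic while $g_N(\xi,\xi)=1$ forces $\xi\notin\mathcal{Y}$, so $\phi$ is non-trivial'' tacitly uses that $\mathcal{Y}$ is \emph{maximally} isotropic, i.e.\ $\mathcal{Y}^\perp=\mathcal{Y}$; total isotropy alone would not give the implication. This is harmless since you establish non-triviality of $\phi$ directly two lines later via $\phi(\xi_1-\xi)=-1$, but you might tighten the exposition there.
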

\begin{proof}
Equation (\ref{eqn-2}) implies $\xi_1-\xi\in\mathcal{Y}$. We can choose an orthonormal basis
for
$E_{\frac14}(\xi)$ of the form
$$\{e_1^+,\dots,e_{n-1}^+,\mathfrak{J}e_1^+,\dots,\mathfrak{J}e_{n-1}^+\}$$
where the $e_i^+$ are spacelike and the  $\mathfrak{J}e_i^+$ are timelike. We prove Assertion (1) by noting
that we have the following basis for $\mathcal{Y}$:
$$\{\xi-\xi_1,e_1^+-\mathfrak{J}e_1^+,\dots,e_{n-1}^+-\mathfrak{J}e_{n-1}^+\}\,.$$

Suppose that Assertion (2) fails. We argue for a contradiction. Choose
$\eta\in\mathcal{Y}$ so that ${}^{2}\!\mathcal{J}(\xi)\eta\notin\mathcal{S}(\xi)$. By Lemma \ref{lem-2.2},
${}^{2}\!\mathcal{J}(\xi)\eta\in\mathcal{Y}$. By Assertion (1), there exists
$c\ne0$ so that
$${}^{2}\!\mathcal{J}(\xi)\eta=c(\xi-\xi_1)+\eta_1\quad\text{for}\quad\eta_1\in\mathcal{S}(\xi)\,.$$
Thus
$c\xi\in E_1(\xi)+\operatorname{Range}({}^{2}\!\mathcal{J}(\xi))+E_{\frac14}(\xi)\subset E_0(\xi)^\perp$
which is false; this contradiction establishes Assertion (2). To prove Assertion (3), express:
\begin{equation}\label{eqn-6}
\mathcal{U}(\xi)=E_0(\xi)\oplus\mathcal{Y}=\xi\cdot\mathbb{R}\oplus(\xi_1-\xi)\cdot\mathbb{R}\oplus\mathcal{S}(\xi)\\
=\xi\cdot\mathbb{R}\oplus\xi_1\cdot\mathbb{R}\oplus\mathcal{S}(\xi)\,.
\end{equation}
As ${}^{g_{\tilde{\mathbb{C}}P}}\!\!\mathcal{J}(\xi)\xi=0$, as ${}^{g_{\tilde{\mathbb{C}}P}}\!\!\mathcal{J}(\xi)\xi_1=\xi_1$, and
as $S(\xi)\subset E_{\frac14}(\xi)$, ${}^{g_{\tilde{\mathbb{C}}P}}\!\!\mathcal{J}(\xi)$ preserves $\mathcal{U}(\xi)$. As
${}^{2}\!\mathcal{J}(\xi)\xi=0$ and as ${}^{2}\!\mathcal{J}(\xi)\mathcal{Y}\subset\mathcal{Y}$, ${}^{2}\mathcal{J}(\xi)$
preserves
$\mathcal{U}(\xi)$ as well.
\end{proof}

We examine the eigenvalue structure:
\begin{lemma}\label{lem-2.4}
Let $(M,D)$ be an affine manifold. Let $Q\in N$. Let
$\xi\in S^+(T_QN,g_N)$. Assume $(M,D)$ is affine Osserman at $P=\sigma(Q)$. If
there is $0\ne\eta\in T_QN\otimes_{\mathbb{R}}\mathbb{C}$ with
${}^{g_N}\mathcal{J}(\xi)\eta=\mu\eta$, then:
\begin{enumerate}\item
If $\eta\not\in \mathcal{U}(\xi)\otimes_{\mathbb{R}}\mathbb{C}$, then $\mu=\frac14$.
\item If $\eta\in \mathcal{U}(\xi)\otimes_{\mathbb{R}}\mathbb{C}$ and if
$\eta\not\in\mathcal{S}(\xi)\otimes_{\mathbb{R}}\mathbb{C}$, then $\mu=0$ or $\mu=1$.
\item If $\eta\in \mathcal{S}(\xi)\otimes_{\mathbb{R}}\mathbb{C}$, then $\mu=\frac14$.
\item $\operatorname{Spec}\{{}^{g_N}\!\!\mathcal{J}(\xi)\}\subset\{0,1,\frac14\}$.
\end{enumerate}\end{lemma}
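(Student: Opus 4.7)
\emph{The plan.} I will exploit the decomposition ${}^{g_N}\!\mathcal{J}(\xi)={}^{g_{\tilde{\mathbb{C}}P}}\!\mathcal{J}(\xi)+{}^{2}\!\mathcal{J}(\xi)$ and the filtration $\mathcal{S}(\xi)\subset\mathcal{U}(\xi)\subset T_QN$, each step of which is invariant under both summands by Lemma~\ref{lem-2.3}. Assertion~(4) follows formally from (1)--(3), so only those three need independent proof. The affine Osserman hypothesis will enter only through the nilpotency of ${}^D\!\mathcal{J}(a)$ provided by Lemma~\ref{lem-2.2}.

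\emph{Assertion (1).} I would analyze the operator $\overline{\mathcal{J}}$ that ${}^{g_N}\!\mathcal{J}(\xi)$ induces on $T_QN/\mathcal{U}(\xi)$. Equation~(\ref{eqn-5}) combined with Lemma~\ref{lem-2.3}(1) gives $\mathcal{U}(\xi)=E_0(\xi)\oplus E_1(\xi)\oplus\mathcal{S}(\xi)$ with $\mathcal{S}(\xi)\subset E_{\frac14}(\xi)$, so the quotient is canonically $E_{\frac14}(\xi)/\mathcal{S}(\xi)$; hence ${}^{g_{\tilde{\mathbb{C}}P}}\!\mathcal{J}(\xi)$ contributes $\frac14\cdot\id$ there. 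Equivalently, $\sigma_\ast$ identifies the same quotient with $T_PM/(a\cdot\mathbb{R})$, and Lemma~\ref{lem-2.2} shows that ${}^{2}\!\mathcal{J}(\xi)$ descends to what its upper-left block ${}^D\!\mathcal{J}(a)$ induces on $T_PM/(a\cdot\mathbb{R})$; the affine Osserman hypothesis makes ${}^D\!\mathcal{J}(a)$ nilpotent, so its descent is nilpotent. Thus $\overline{\mathcal{J}}=\frac14\cdot\id+N_0$ with $N_0$ nilpotent, so its only eigenvalue is $\frac14$, and any eigenvector $\eta\notin\mathcal{U}(\xi)\otimes_{\mathbb{R}}\mathbb{C}$ passes to a nonzero vector in the quotient, forcing $\mu=\frac14$.

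\emph{Assertions (2) and (3).} I would restrict ${}^{g_N}\!\mathcal{J}(\xi)$ to $\mathcal{U}(\xi)$ and express its matrix in the basis $\{\xi,\,\xi_1\}\cup B$, where $B$ is any basis of $\mathcal{S}(\xi)$. The $\xi$-column vanishes because any Jacobi operator annihilates its defining vector. The $\xi_1$-column has diagonal entry $1$ (from ${}^{g_{\tilde{\mathbb{C}}P}}\!\mathcal{J}(\xi)\xi_1=\xi_1$) and all remaining entries in $\mathcal{S}(\xi)$, since writing $\xi_1=\xi+(\xi_1-\xi)$ yields ${}^{2}\!\mathcal{J}(\xi)\xi_1={}^{2}\!\mathcal{J}(\xi)(\xi_1-\xi)\in\mathcal{S}(\xi)$ by Lemma~\ref{lem-2.3}(2). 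The diagonal block on $\mathcal{S}(\xi)$ is $\frac14\cdot\id+N$, where $N$ is the restriction to $\mathcal{S}(\xi)$ of the nilpotent lower-right block ${}^D\!\mathcal{J}(a)^t$ of Lemma~\ref{lem-2.2}. Writing $\eta=a\xi+b\xi_1+s\in\mathcal{U}(\xi)\otimes_{\mathbb{R}}\mathbb{C}$ and matching coefficients in ${}^{g_N}\!\mathcal{J}(\xi)\eta=\mu\eta$ immediately yields $\mu=0$ if $a\neq0$ and $\mu=1$ if $a=0$, $b\neq0$, proving (2); when $a=b=0$ the equation reduces to $(\frac14\cdot\id+N)s=\mu s$ with $s\neq0$, which forces $\mu=\frac14$ by nilpotency of $N$, proving (3).

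\emph{Main obstacle.} Once the filtration $\mathcal{S}(\xi)\subset\mathcal{U}(\xi)\subset T_QN$ is in place the computations are essentially routine; the conceptual content is isolating the two places where the affine Osserman hypothesis enters---on the quotient $T_QN/\mathcal{U}(\xi)$ in (1), and on the diagonal block over $\mathcal{S}(\xi)$ in (3)---so that the remaining spectral data $\{0,1,\frac14\}$ are supplied entirely by the model tensor ${}^{g_{\tilde{\mathbb{C}}P}}\!R$ and ${}^{2}\!\mathcal{J}(\xi)$ only contributes to Jordan structure. The slightly delicate point is verifying that the quotient identification via $\sigma_\ast$ is genuinely compatible with the upper-left block of Lemma~\ref{lem-2.2}, which uses ${}^D\!\mathcal{J}(a)a=0$.
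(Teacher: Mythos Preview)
Your proof is correct and follows essentially the same strategy as the paper: both exploit the filtration $\mathcal{S}(\xi)\subset\mathcal{U}(\xi)\subset T_QN$ (invariant under each summand by Lemma~\ref{lem-2.3}) together with the decomposition ${}^{g_N}\!\mathcal{J}(\xi)={}^{g_{\tilde{\mathbb{C}}P}}\!\mathcal{J}(\xi)+{}^{2}\!\mathcal{J}(\xi)$, reading off $\{0,1,\tfrac14\}$ from the model operator on the graded pieces while the affine Osserman hypothesis forces ${}^{2}\!\mathcal{J}(\xi)$ to contribute only nilpotent perturbations. The only cosmetic differences are that the paper argues nilpotency on $T_QN/\mathcal{U}(\xi)$ directly from the global nilpotency of ${}^{2}\!\mathcal{J}(\xi)$ rather than via your $\sigma_\ast$-identification with $T_PM/(a\cdot\mathbb{R})$, and phrases (2)--(3) in quotient language rather than your explicit block-triangular matrix in the basis $\{\xi,\xi_1\}\cup B$.
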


\begin{proof} By Lemma \ref{lem-2.2}, ${}^{2}\!\mathcal{J}(\xi)$ is
nilpotent since $(M,D)$ is affine Osserman at $P$.
By Lemma \ref{lem-2.3}, $\mathcal{U}(\xi)$ is preserved by
${}^{g_{\tilde{\mathbb{C}}P}}\!\!\mathcal{J}(\xi)$
and by ${}^{2}\!\mathcal{J}(\xi)$. Thus,
there are induced operators
${}^{g_{\tilde{\mathbb{C}}P}}\!\!\tilde{\mathcal{J}}(\xi)$, ${}^2\!\tilde{\mathcal{J}}(\xi)$, and
${}^{g_N}\!\!\tilde{\mathcal{J}}(\xi)={}^{g_{\tilde{\mathbb{C}}P}}\!\!\tilde{\mathcal{J}}(\xi)+{}^2\!\tilde{\mathcal{J}}(\xi)$
on the quotient space:
$$\mathcal{V}(\xi):=\{T_QN/\mathcal{U}(\xi)\}\otimes_{\mathbb{R}}\mathbb{C}\,.$$
If $\eta\notin \mathcal{U}(\xi)\otimes_{\mathbb{R}}\mathbb{C}$, then $\tilde\eta\in\mathcal{V}(\xi)$,
$\tilde\eta\ne0$ and ${}^{g_N}\!\tilde{\mathcal{J}}(\xi)\tilde\eta=\mu\tilde\eta$. By Equation (\ref{eqn-6}),
$$\mathcal{V}(\xi)=\{E_{\frac14}(\xi)/\mathcal{S}(\xi)\}\otimes_{\mathbb{R}}\mathbb{C}\,.$$
Consequently, ${}^{g_{\tilde{\mathbb C}P}}\!\!\tilde{\mathcal{J}}(\xi)=\frac14\id$. Since
${}^2\!\tilde{\mathcal{J}}(\xi)$ is nilpotent and
${}^{g_N}\!\!\tilde{\mathcal{J}}(\xi)=\frac14\id+{}^2\!\tilde{\mathcal{J}}(\xi)$,
${}^{g_N}\!\!\tilde{\mathcal{J}}(\xi)$ has only the eigenvalue $\frac14$. Thus $\mu=\frac14$. This
establishes Assertion (1).

To prove Assertion  (2),  suppose there exists $0\neq\eta\in\mathcal{U}(\xi)\otimes_{\mathbb{R}}\mathbb{C}$ such
that $\eta\notin\mathcal{S}(\xi)\otimes_{\mathbb{R}}\mathbb{C}$ and
${}^{g_N}\!\!\mathcal{J}(\xi)\eta=\mu\eta$. By Lemma \ref{lem-2.3}, $\mathcal{S}(\xi)$ is
preserved by ${}^{g_{\tilde{\mathbb{C}}P}}\!\!\mathcal{J}(\xi)$ and
${}^{2}\!\mathcal{J}(\xi)$. Thus there are
induced operators that  we again
denote by
${}^{g_{\tilde{\mathbb{C}}P}}\!\!\tilde{\mathcal{J}}(\xi)$, ${}^2\!\tilde{\mathcal{J}}(\xi)$, and
${}^{g_N}\!\!\tilde{\mathcal{J}}(\xi)={}^{g_{\tilde{\mathbb{C}}P}}\!\!\tilde{\mathcal{J}}(\xi)+{}^2\!\tilde{\mathcal{J}}(\xi)$ on the
quotient space:
$$\mathcal{W}(\xi):=\{\mathcal{U}(\xi)/\mathcal{S}(\xi)\}\otimes_{\mathbb{R}}\mathbb{C}\,.$$
Since $\tilde\eta\ne0$, $\mu$ is an eigenvalue of ${}^{g_N}\!\!\tilde{\mathcal{J}}(\xi)$.
By Equation (\ref{eqn-6}), $\mathcal{W}(\xi)=\tilde\xi\cdot\mathbb{R}\oplus\tilde\xi_1\cdot\mathbb{R}$. By Lemma \ref{lem-2.3},
${}^{2}\!\mathcal{J}(\xi)\xi=0$ and ${}^{2}\!\mathcal{J}(\xi)\xi_1={}^{2}\!\mathcal{J}(\xi)(\xi_1-\xi)\in\mathcal{S}(\xi)$ and thus
${}^2\!\tilde{\mathcal{J}}(\xi)=0$.
Since ${}^{g_{\tilde{\mathbb{C}}P}}\!\!\tilde{\mathcal{J}}(\xi)\tilde\xi=0$ and ${}^{g_{\tilde{\mathbb{C}}P}}\!\!\tilde{\mathcal{J}}(\xi)\tilde\xi_1=\tilde\xi_1$ we have
${}^{g_N}\!\!\tilde{\mathcal{J}}(\xi)\tilde\xi=0$ and ${}^{g_N}\!\!\tilde{\mathcal{J}}(\xi)\tilde\xi_1=\tilde\xi_1$. Thus $\mu\in\{0,1\}$. Assertion (2)
follows.

To prove Assertion (3), we note that ${}^{g_{\tilde{\mathbb{C}}P}}\!\!\mathcal{J}(\xi)=\frac14\id$ on $\mathcal{S}(\xi)$ and that
${}^{2}\!\mathcal{J}(\xi)$ is nilpotent and preserves
$\mathcal{S}(\xi)$. Assertion (4) follows from Assertions (1)-(3).
\end{proof}

\medbreak\noindent{\it Proof of Theorem \ref{thm-1.2}}. Let $(M,D)$ be an affine manifold which is affine
Osserman at $P\in M$. Choose local coordinates on $M$ so ${}^D\Gamma(P)=0$. Let
$D_0$ be the flat torsion-free connection defined on a neighborhood of $P$ whose Christoffel symbols vanish in
these coordinates. Set
$$D^\varepsilon:=\varepsilon
D+(1-\varepsilon)D_0$$ to define a $1$-parameter family of
metrics $g_N^\varepsilon$ interpolating between $g_N^1=g_N$ and
$g_N^0=g_{\tilde{\mathbb{C}}P}$. Since ${}^{D^\varepsilon}\!\!R(P)=\varepsilon\cdot{}^D\!R(P)$, all the connections
$D^\varepsilon$ are affine Osserman at
$P$. Thus Lemma \ref{lem-2.4} implies
$\operatorname{Spec}\{{}^{g_N^\varepsilon}\!\!\mathcal{J}(\xi)\}\subset\{0,1,\frac14\}$
for all $\varepsilon$ so the eigenvalue multiplicities are
unchanged during this perturbation as well. Taking $\varepsilon=0$ yields
the desired multiplicities and establishes Assertion (1) of
Theorem \ref{thm-1.2} for $\xi$ spacelike. We now use results of \cite{GKV} to see that spacelike Osserman implies
timelike Osserman and to relate the eigenvalues and eigenvalue
multiplicities on $S^+(T_QN,g_N)$ to the eigenvalues and eigenvalue
multiplicities on $S^-(T_QN,g_N)$; alternatively, of course, one could simply proceed directly as well. This proves Assertion
(1) of Theorem
\ref{thm-1.2}; Assertion (2) follows from Assertion (1).
\hfill\qed

\medbreak\noindent{\it Proof of Theorem \ref{thm-1.3}}.
Let $(M,D)$ be an affine manifold, let $Q\in N$, let $\pi\in\mathcal{P}_Q(N)$, and let $\xi\in S^+(\pi,g_N)$. Let
$a=\sigma_*\xi=\sigma_*\mathfrak{J}(\xi)$.
 By Lemma \ref{lem-2.2},
\begin{eqnarray*}
&&{}^{2}\!\mathcal{J}(\xi)=\left(\begin{array}{ll}{}^{D}\!\mathcal{J}(a)&0\\\star&{}^{D}\!\mathcal{J}(a)^t\end{array}\right),\quad
  {}^{2}\!\mathcal{J}(\mathfrak{J}\xi)=\left(\begin{array}{ll}{}^{D}\!\mathcal{J}(a)&0\\\star_1&{}^{D}\!\mathcal{J}(a)^t\end{array}\right),\\
&&{}^{2}\!\mathcal{J}(\pi)={}^{2}\!\mathcal{J}(\xi)-{}^{2}\!\mathcal{J}(\mathfrak{J}\xi)=
\left(\begin{array}{ll}0&0\\\star_\pi&0\end{array}\right)\,.
\end{eqnarray*}
Thus $\operatorname{Range}({}^{2}\!\mathcal{J}(\pi))\subset\mathcal{Y}$, ${}^{2}\!\mathcal{J}(\pi)\mathcal{Y}=0$,
and ${}^2\!\mathcal{J}(\pi)$ is nilpotent. Choose a basis $\{e_1,e_2,f_1,\dots,f_{2n-2}\}$ for $T_QN$
so $\operatorname{Span}\{e_1,e_2\}$ is the $+1$ eigenspace of
${}^{g_{\tilde{\mathbb{C}}P}}\!\!\mathcal{J}(\pi)$ and so $\operatorname{Span}\{f_1,\dots,f_{2n-2}\}$ is the
$+\frac12$ eigenspace of
${}^{g_{\tilde{\mathbb{C}}P}}\!\!\mathcal{J}(\pi)$. We compute:
$$({}^{g_N}\!\!\mathcal{J}(\pi)-\id)e_i={}^{2}\!\mathcal{J}(\pi)e_i\in\mathcal{Y}\quad\text{and}\quad
  ({}^{g_N}\!\!\mathcal{J}(\pi)-\textstyle\frac12\id)f_i={}^{2}\!\mathcal{J}(\pi)f_i\in\mathcal{Y}\,.$$
By Equation (\ref{eqn-5}),
$$
{}^{g_{\tilde{\mathbb{C}}P}}\!\!\mathcal{J}(\pi)=\id\text{ on }(\xi-\mathfrak{J}\xi)\cdot\mathbb{R}\quad\text{and}\quad
{}^{g_{\tilde{\mathbb{C}}P}}\!\!\mathcal{J}(\pi)=\textstyle\frac12\id\text{ on }\mathcal{S}(\xi)\,.
$$
Consequently by Lemma \ref{lem-2.3}, ${}^{g_{\tilde{\mathbb{C}}P}}\!\!\mathcal{J}(\pi)\mathcal{Y}\subset\mathcal{Y}$. Since
${}^{2}\!\mathcal{J}(\pi)=0$ on
$\mathcal{Y}$, this implies ${}^{g_N}\!\!\mathcal{J}(\pi)\mathcal{Y}\subset\mathcal{Y}$. We may now conclude:
$$\operatorname{Range}\{({}^{g_N}\!\!\mathcal{J}(\pi)-\id)\cdot({}^{g_N}\!\!\mathcal{J}(\pi)-\textstyle\frac12\id)\}\subset\mathcal{Y}\,.$$
Since ${}^{2}\!\mathcal{J}(\pi)=0$ on $\mathcal{Y}$, ${}^{g_N}\!\!\mathcal{J}(\pi)={}^{g_{\tilde{\mathbb{C}}P}}\!\!\mathcal{J}(\pi)$ on $\mathcal{Y}$ and consequently
by Equation (\ref{eqn-5}) and Lemma \ref{lem-2.3},
\begin{eqnarray*}
&&({}^{g_N}\!\!\mathcal{J}(\pi)-\id)\cdot({}^{g_N}\!\!\mathcal{J}(\pi)-\textstyle\frac12\id)\mathcal{Y}=\{0\}\quad\text{so}\\
&&({}^{g_N}\!\!\mathcal{J}(\pi)-\id)^2{\cdot}({}^{g_N}\!\!\mathcal{J}(\pi)-\textstyle\frac12\id)^2=\{0\}\,.
\end{eqnarray*}
Consequently $\operatorname{Spec}\{{}^{g_N}\!\!\mathcal{J}(\pi)\}\subset\{\frac12,1\}$ and ${}^{g_N}\!\!\mathcal{J}(\pi)$ has only $1\times 1$ or $2\times
2$ Jordan blocks. As in the proof of Theorem \ref{thm-1.2}, we set
$D^\varepsilon:=\varepsilon D+(1-\varepsilon)D_0$ to construct a $1$-parameter family of semi  para-complex Osserman metrics
$g_N^\varepsilon$ interpolating between $g^1_N=g_N$ and $g_N^0=g_{\tilde{\mathbb{C}}P}$. Since the eigenvalues are unchanged,
the eigenvalue multiplicities are unchanged. Consequently $\frac12$ is an eigenvalue of multiplicity $2n-2$ and $1$ is an
eigenvalue of multiplicity 2.\qed

\section{Examples}\label{sect-3} Throughout Section \ref{sect-3}, we will take $M=\mathbb{R}^n$ for some $n$ and we will
consider a point $Q\in Z(N)$ so that ${}^D\Gamma(P)=0$ where $P=\sigma(Q)$. This simplifies the
computations greatly. We will list only the possibly non-zero components of various tensors up to the obvious $\mathbb{Z}_2$
symmetries.

\begin{lemma}\label{lem-3.1}
Let $(M,D)$ be an affine manifold, let $Q\in Z(N)$, and let $P=\sigma(Q)$. Assume that ${}^D\Gamma(P)=0$.
\begin{enumerate}
\item The possibly non-zero curvatures of ${}^{g_D}\!R(Q)$  are ${}^{g_D}\!R_{ijkl^\prime}(Q)={}^{D}\!R_{ijk}{}^l(P)$.
\item The non-zero curvatures of ${}^{g_{\tilde{\mathbb{C}}P}}\!R(Q)$ are:
\begin{eqnarray*}
&&{}^{g_{\tilde{\mathbb{C}}P}}\!R_{i,i^\prime,i^{\prime},i}(Q)=-1,\\
&&{}^{g_{\tilde{\mathbb{C}}P}}\!R_{i,j^\prime,i^\prime,j}(Q)=
  {}^{g_{\tilde{\mathbb{C}}P}}\!R_{i,i^\prime,j^\prime,j}(Q)=-\textstyle\frac12\text{ for }i\ne j\,.
\end{eqnarray*}
\item ${}^{g_N}\!R(Q)={}^{g_{\tilde{\mathbb{C}}P}}\!
R(Q)+{}^{g_D}\!R(Q)$.
\end{enumerate}\end{lemma}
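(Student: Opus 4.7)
The plan is to exploit the simultaneous normalization afforded by the hypotheses: at $Q\in Z(N)$ we have $x_{k'}(Q)=0$, and by assumption ${}^D\Gamma(P)=0$ where $P=\sigma(Q)$. A direct inspection of the explicit formulas for $g_N$, $g_{\tilde{\mathbb{C}}P}$ and $g_D$ shows that all three metrics agree at $Q$ and, crucially, have all their first-order coordinate derivatives vanishing at $Q$. Consequently the Levi-Civita Christoffel symbols of all three metrics vanish at $Q$, and the curvature tensor with all indices lowered reduces at $Q$ to the purely linear expression
\begin{equation*}
R_{abcd}(Q)=\tfrac{1}{2}\bigl(\partial_a\partial_c g_{bd}-\partial_a\partial_d g_{bc}-\partial_b\partial_c g_{ad}+\partial_b\partial_d g_{ac}\bigr)(Q),
\end{equation*}
with no contribution from products of Christoffel symbols. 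Establishing this common reduction is the first step and the engine of all three assertions.

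For Assertion (1) I would compute the second partial derivatives of $g_D$ at $Q$. The only non-constant component of $g_D$ is $g_{D,ij}=-2x_{k'}{}^D\Gamma_{ij}{}^k$, and its only non-vanishing second derivative at $Q$ is the mixed one $\partial_{a'}\partial_b g_{D,ij}(Q)=-2\partial_b{}^D\Gamma_{ij}{}^a(P)$: the pure unprimed derivatives vanish because $x_{k'}(Q)=0$, and the pure primed derivatives vanish because the $x_{k'}$ enter only linearly. Inserting this into the displayed formula, only components with exactly one primed index survive (up to the symmetries of $R$), and a short calculation yields ${}^{g_D}R_{ijkl'}(Q)=\partial_i{}^D\Gamma_{jk}{}^l(P)-\partial_j{}^D\Gamma_{ik}{}^l(P)$; the right-hand side equals ${}^DR_{ijk}{}^l(P)$ since the $\Gamma\cdot\Gamma$ terms in ${}^DR$ drop out at $P$ by hypothesis.

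For Assertion (2) the analogous computation for $g_{\tilde{\mathbb{C}}P}$ applies. Here the only non-constant component is $g_{ij}=x_{i'}x_{j'}$, whose only non-trivial second derivative at $Q$ is the purely primed one $\partial_{b'}\partial_{a'}g_{ij}(Q)=\delta_{ai}\delta_{bj}+\delta_{bi}\delta_{aj}$. Thus only components with two primed and two unprimed indices can be non-zero, and substituting into the curvature formula for each of the index patterns $(i,i',i',i)$, $(i,j',i',j)$ and $(i,i',j',j)$ with $i\ne j$ immediately yields the stated values $-1$ and $-\tfrac{1}{2}$.

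Finally, Assertion (3) follows from the identity $g_N-g_{\tilde{\mathbb{C}}P}=g_D-2\,dx^i\circ dx^{i'}$ together with the fact that $2\,dx^i\circ dx^{i'}$ has constant components, so $\partial^2g_N(Q)=\partial^2 g_{\tilde{\mathbb{C}}P}(Q)+\partial^2g_D(Q)$; combined with the linearity of the reduced curvature formula established in the first step, this gives ${}^{g_N}R(Q)={}^{g_{\tilde{\mathbb{C}}P}}R(Q)+{}^{g_D}R(Q)$. There is no genuine obstacle beyond careful index bookkeeping; the key mild point is that the non-vanishing second derivatives of $g_D$ and of $g_{\tilde{\mathbb{C}}P}$ fall into complementary primed/unprimed patterns, which is exactly what makes the three curvature blocks decouple cleanly.
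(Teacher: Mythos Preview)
Your proof is correct and follows essentially the same approach as the paper: both exploit the vanishing of the $1$-jets of $g_D$, $g_{\tilde{\mathbb{C}}P}$, and $g_N$ at $Q$ to reduce the curvature to the linear second-derivative formula $R_{abcd}=\tfrac12(\partial_a\partial_c g_{bd}+\partial_b\partial_d g_{ac}-\partial_a\partial_d g_{bc}-\partial_b\partial_c g_{ad})$, and then compute those second derivatives directly. Your treatment of Assertions~(2) and~(3) is somewhat more explicit than the paper's, which simply declares them ``similar'', but the underlying argument is the same.
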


\begin{proof} Let $(u_1,\dots,u_n)$ be coordinates on a pseudo-Riemannian manifold $(U,g_U)$. Expand
$g_U=g_{ab}du^a\circ du^b$. Suppose that the $1$-jets of the functions $g_{ab}$ vanish at a point $S$ of
$U$. We then have
$$
{}^{g_U}\!R_{abcd}(S)=\textstyle\frac12\{\partial_{u_a}\partial_{u_c}g_{bd}+\partial_{u_b}\partial_{u_d}g_{ac}
-\partial_{u_a}\partial_{u_d}g_{bc}-\partial_{u_b}\partial_{u_c}g_{ad}\}(S)\,.
$$
We apply this observation to the setting at hand. Since $x^\prime$
and ${}^D\Gamma$ vanish at $Q$ and
$P$, respectively, the $1$-jets of $g_D$, of $g_{\tilde{\mathbb{C}}P}$, and of $g_N$ vanish at $Q$. We establish
Assertion (1) by computing:
\begin{eqnarray*}
&&{}^{g_D}\!R_{ijkl^\prime}(Q)=\textstyle\frac12\{\partial_{x_j}\partial_{x_{\ell^\prime}}(-2x_{h^\prime}{}^D\Gamma_{ik}{}^h)
-\partial_{x_i}\partial_{x_{\ell^\prime}}(-2x_{h^\prime}{}^D\Gamma_{jk}{}^h)\}(Q)\\
&&\qquad=\{\partial_{x_i}{}^D\Gamma_{jk}{}^l-\partial_{x_j}{}^D\Gamma_{ik}{}^l\}(P)={}^D\!R_{ijk}{}^l(P)\,.
\end{eqnarray*}
The proof of Assertion (2) and of Assertion (3) is similar.
\end{proof}

\medbreak\noindent{\it Proof of Theorem \ref{thm-1.4}}.
 Let $r\ge2$ and let $M:=\mathbb{R}^{r+1}$. Let $\{x_0,\dots,x_r\}$ be the usual coordinates on
$M$ and $\{x_{0^\prime},\dots,x_{r^\prime}\}$ the dual fiber coordinates on $T^*M$.  We let indices
$a,b,c,d$ range from
$1$ through $r$ and indices $i,j,k,l$ range from $0$ through $r$. Let
$U_a{}^b$ be a lower triangular matrix, i.e. $U_a{}^b=0$ for $b\le a$. Let $\theta=\theta(x_0)$ be a smooth function
of $1$ variable. Define a torsion-free connection $D$ on
$TM$ with non-zero Christoffel symbols:
$${}^D\Gamma_{0a}{}^b={}^D\Gamma_{a0}{}^b=\theta U_a{}^b\,.$$
The curvature is given by
$$
{}^D\!R_{ijk}{}^l=\partial_{x_i}\{{}^D\Gamma_{jk}{}^l\}-\partial_{x_j}\{{}^D\Gamma_{ik}{}^l\}
+\{{}^D\Gamma_{ic}{}^l\}\{{}^D\Gamma_{jk}{}^c\}
-\{{}^D\Gamma_{jc}{}^l\}\{{}^D\Gamma_{ik}{}^c\}\,.
$$
Without loss of generality, we suppose $i<j$. The first term can play a role only if $i=k=0$.
The second term plays no role.
The third can play a role only if
$i=k=0$. The final term plays no role.  Thus possibly non-zero curvatures are:
$$
\begin{array}{l}
{}^D\!R_{0a0}{}^b=\partial_{x_0}\{{}^D\Gamma_{a0}{}^b\}+\{{}^D\Gamma_{0c}{}^b\}\{{}^D\Gamma_{a0}{}^c\}
=\partial_{x_0}\theta\cdot U_a{}^b+\theta^2\cdot U_c{}^bU_a{}^c.
\end{array}$$
Let $X\in T_PM$. As we must have $0<a<b$ in the above relation,
$${}^D\!\mathcal{J}(X)\partial_{x_i}\in\operatorname{Span}\{\partial_{x_{i+1}},\dots,\partial_{x_r}\}\,.$$
Consequently $(M,D)$ is affine Osserman. Assume $\theta(0)=0$ and $\partial_{x_0}\theta(0)=-1$. We set $P=0$ and take
$Q=(0,0)$. We may then apply Lemma \ref{lem-3.1} to see:
\begin{eqnarray*}
&&{}^{g_N}\!R(Q)={}^{g_{\tilde{\mathbb{C}}P}}\!R(Q)+{}^{g_D}\!R(Q),\\
&&{}^{g_N}\!R(\partial_{x_i},\partial_{x_{i^\prime}},\partial_{x_{i^\prime}},\partial_{x_i})(Q)=-1,\\
&&{}^{g_N}\!R(\partial_{x_i},\partial_{x_{j^\prime}},\partial_{x_{i^\prime}},\partial_{x_j})(Q)=
  {}^{g_N}\!R(\partial_{x_i},\partial_{x_{i^\prime}},\partial_{x_{j^\prime}},\partial_{x_j})(Q)=-\textstyle\frac12\quad(i\ne j),\\
&&{}^{g_N}\!R(\partial_{x_i},\partial_{x_j},\partial_{x_k},\partial_{x_{d^\prime}})(Q)={}^D\!R_{ijk}{}^d(P)\,.
\end{eqnarray*}
First, take $\xi_1:=\frac{1}{\sqrt{2}}(\partial_{x_1}+\partial_{x_{1^\prime}})$. Then ${}^{g_N}\!\!\mathcal{J}(\xi_1)={}^{g_{\tilde{\mathbb{C}}P}}\!\!\mathcal{J}(\xi_1)$ is diagonalizable and
exhibits trivial Jordan normal form; the curvature of
$D$ plays no role. Next, we consider $\xi_2:=\frac{1}{\sqrt{2}}(\partial_{x_0}+\partial_{x_{0^\prime}})$
and $\mathfrak{J}\xi_2=\frac{1}{\sqrt{2}}(\partial_{x_0}-\partial_{x_{0^\prime}})$. Then:
\medbreak\qquad
${}^{g_{\tilde{\mathbb{C}}P}}\!\!\mathcal{J}(\xi_2)\xi_2=0$,\qquad\qquad\quad\phantom{a..}
${}^{g_D}\!\!\mathcal{J}(\xi_2)\xi_2=0$,
\smallbreak\qquad
${}^{g_{\tilde{\mathbb{C}}P}}\!\!\mathcal{J}(\xi_2)\partial_{x_a}=\textstyle\frac14\partial_{x_a}$,\qquad\quad\phantom{...}
${}^{g_D}\!\!\mathcal{J}(\xi_2)\partial_{x_a}=U_a{}^b\partial_{x_b}$,
\smallbreak\qquad
${}^{g_{\tilde{\mathbb{C}}P}}\!\!\mathcal{J}(\xi_2)\mathfrak{J}\xi_2=\mathfrak{J}\xi_2$,\qquad\qquad\phantom{..}
${}^{g_D}\!\!\mathcal{J}(\xi_2)\mathfrak{J}\xi_2=0$,
\smallbreak\qquad
${}^{g_{\tilde{\mathbb{C}}P}}\!\!\mathcal{J}(\xi_2)\partial_{x_{a^\prime}}=\textstyle\frac14\partial_{x_{a^\prime}}$,\qquad\quad\phantom{.}
${}^{g_D}\!\!\mathcal{J}(\xi_2)\partial_{x_{a^\prime}}=U_b{}^a\partial_{x_{b^\prime}}$.\qed

\medbreak\noindent{\it Proof of Theorem \ref{thm-1.5}}.
Let $n\ge3$, let $M=\mathbb{R}^n$, let $P=0$, and let $Q=(0,0)$. Let $\theta=\theta(x_1)$ be a smooth function of $1$
variable. Let
$\theta_1:=\partial_{x_1}\theta$; we suppose
$\theta(0)=0$ and $\theta_1(0)\ne0$. Let $D$ be the affine
connection whose only non-zero Christoffel symbol is
${}^D\Gamma_{22}{}^3=\theta$. Since $\theta=\theta(x_1)$ and since
the only non-zero covariant derivative is
$D_{\partial_{x_2}}\partial_{x_2}=\theta\partial_{x_3}$, ${}^{D}\!\!\mathcal{J}$ is nilpotent and
$(M,D)$ is affine Osserman as
the only non-zero curvature is:
$${}^D\!R(\partial_{x_1},\partial_{x_2})\partial_{x_2}=\theta_1\partial_{x_3}\,.$$

By Lemma \ref{lem-3.1},
\begin{eqnarray*}
&&{}^{g_N}\!R(\partial_{x_i},\partial_{x_{i^\prime}},\partial_{x_{i^\prime}},\partial_{x_i})(Q)=-1,\\
&&{}^{g_N}\!R(\partial_{x_i},\partial_{x_{j^\prime}},\partial_{x_{i^\prime}},\partial_{x_j})(Q)=
  {}^{g_N}\!R(\partial_{x_i},\partial_{x_{i^\prime}},\partial_{x_{j^\prime}},\partial_{x_j})(Q)
=-\textstyle\frac12\quad(i\ne j),\\
&&{}^{g_N}\!R(\partial_{x_{3^\prime}},\partial_{x_2},\partial_{x_2},\partial_{x_1})(Q)=\theta_1\,.
\end{eqnarray*}
Let $\xi:=\frac{1}{\sqrt{2}}(\partial_{x_1}+\partial_{x_{1{^\prime}}})\in
S^+(T_QN,g_N)$;
$\mathfrak{J}\xi=\frac{1}{\sqrt{2}}(\partial_{x_1}-\partial_{x_{1^\prime}})$.
As ${}^{2}\!\mathcal{J}(\xi)={}^{2}\!\mathcal{J}(\mathfrak{J}\xi)=0$,
${}^{g_N}\!\!\mathcal{J}(\xi)={}^{g_{\tilde{\mathbb{C}}P}}\!\!\mathcal{J}(\xi)$
and
${}^{g_N}\!\!\mathcal{J}(\pi_\xi)={}^{g_{\tilde{\mathbb{C}}P}}\!\!\mathcal{J}(\pi_\xi)$
are diagonalizable. Next consider
\begin{eqnarray*}
&&\eta:=\textstyle\frac{1}{2}(\partial_{x_1}+\partial_{x_3}+\partial_{x_{1^\prime}}+\partial_{x_{3^\prime}})\in S^+(T_QN,g_N),\\
&&\mathfrak{J}\eta=\textstyle\frac{1}{2}(\partial_{x_1}+\partial_{x_3}-\partial_{x_{1^\prime}}-\partial_{x_{3^\prime}})\in
S^-(T_QN,g_N)\,.
\end{eqnarray*}
The only non-trivial components are provided by:
$$
{}^{g_D}\!\!\mathcal{J}(\eta)\partial_{x_2}=\textstyle\frac{1}{2}\theta_1\partial_{x_{2^\prime}},\quad
{}^{g_D}\!\!\mathcal{J}(\mathfrak{J}\eta)\partial_{x_2}=-\textstyle\frac{1}{2}\theta_1\partial_{x_{2^\prime}},\quad
{}^{g_D}\!\!\mathcal{J}(\pi_\eta)\partial_{x_2}=\theta_1\partial_{x_{2^\prime}}\,.
$$
Since
$\pi_2:=\operatorname{Span}\{\partial_{x_2},\partial_{x_{2^\prime}}\}$
is contained both in the $\frac14$ eigenspace of
${}^{g_{\tilde{\mathbb{C}}P}}\!\!\mathcal{J}(\eta)$ and in the $\frac12$
eigenspace of ${}^{g_{\tilde{\mathbb{C}}P}}\!\!\mathcal{J}(\pi_\eta)$,
this analysis shows that both ${}^{g_N}\!\!\mathcal{J}(\eta)$ and
${}^{g_N}\!\!\mathcal{J}(\pi_\eta)$ exhibit non-trivial Jordan normal
form.\qed

\section{The third Gray identity, integrability, flatness, and para-complex Osserman}\label{sect-4}

In Section \ref{sect-4.1}, we show $(M,D)$ is flat implies $(N,g_N,\mathfrak{J})$ is integrable and satisfies the third Gray
identity. In Section
\ref{sect-4.2}, we show $(N,g_N,\mathfrak{J})$ is integrable implies $(M,D)$ is flat. In Section \ref{sect-4.3}, we show
$(N,g_N,\mathfrak{J})$ satisfies the third Gray identity implies $(M,D)$ is flat. In Section \ref{sect-4.4}, we show
$\mathfrak{J}{}^{g_N}\!\!\mathcal{J}(\pi)={}^{g_N}\!\!\mathcal{J}(\pi)\mathfrak{J}$ for all $\pi\in\mathcal{P}(N)$ if and only if
$(N,g_N,\mathfrak{J})$ satisfies the third Gray identity. This will complete the proof of Theorem
\ref{thm-1.6}.

\subsection{Flat geometry}\label{sect-4.1}
If $(M,D)$ is flat, then $(N,g_N,\mathfrak{J})$ is
isomorphic to $\tilde{\mathbb{C}}P$ by Theorem \ref{thm-1.1}; $\tilde{\mathbb{C}}P$ is integrable and satisfies the third Gray
identity.

\subsection{Integrability}\label{sect-4.2}
Let
$(M,D)$ be an affine manifold. Suppose that the Nijenhuis tensor
$N_{{\mathfrak{J}}}$ of Equation (\ref{eqn-4}) vanishes for the manifold $(N,g_N,\mathfrak{J})$. Let $P\in M$. Choose local coordinates on $M$ so
that
${}^D\Gamma(P)=0$. Let
$Q\in\sigma^{-1}(P)$. Then:
\medbreak\qquad\qquad
$\mathfrak{J}\partial_{x_i}=\partial_{x_i}-\{x_{i^\prime}x_{a^\prime}-2x_{b^\prime}{}^D\Gamma_{ia}{}^b\}\partial_{x_{a^\prime}}$,
\smallbreak\qquad\qquad
$\mathfrak{J}\partial_{x_j}=\partial_{x_j}-\{x_{j^\prime}x_{c^\prime}-2x_{d^\prime}{}^D\Gamma_{jc}{}^d\}\partial_{x_{c^\prime}}$,
\smallbreak\qquad\qquad
$[\partial_{x_i},\partial_{x_j}]=0$,
\smallbreak\qquad\qquad
$\mathfrak{J}[\mathfrak{J}\partial_{x_i},\partial_{x_j}]
   =2x_{b^\prime}\partial_{x_j}{}^D\Gamma_{ia}{}^b\partial_{x_{a^\prime}}$,
\smallbreak\qquad\qquad
$\mathfrak{J}[\partial_{x_i},\mathfrak{J}\partial_{x_j}]=-2x_{b^\prime}\partial_{x_i}{}^D\Gamma_{ja}{}^b\partial_{x_{a^\prime}}$,
\smallbreak\qquad\qquad
$[\mathfrak{J}\partial_{x_i},\mathfrak{J}\partial_{x_j}]_Q=\{2x_{b^\prime}\partial_{x_i}{}^D\Gamma_{ja}{}^b
-2x_{b^\prime}\partial_{x_j}{}^D\Gamma_{ia}{}^b\}_Q\partial_{x_{a^\prime}}$
\smallbreak\qquad\qquad\qquad\qquad
$+\{x_{i^\prime}x_{a^\prime}\partial_{x_{a^\prime}}(x_{j^\prime}x_{c^\prime})-
 x_{j^\prime}x_{a^\prime}\partial_{x_{a^\prime}}(x_{i^\prime}x_{c^\prime})\}_Q\partial_{x_{c^\prime}}$.
\smallbreak\qquad\qquad$N_{\mathfrak{J}}(\partial_{x_i},\partial_{x_j})(Q)=
4x_{b^\prime}{}^DR_{ija}{}^b(P)\partial_{x_{a^\prime}}$.
\medbreak\noindent Since $N_{\mathfrak{J}}=0$, we conclude $(M,D)$ is flat. We
note that $N_{\mathfrak{J}}$ always vanishes on $Z(N)$. Thus for
this computation it is necessary to take $Q$ arbitrary.

\subsection{The third Gray identity}\label{sect-4.3}
Suppose that $(N,g_N,\mathfrak{J})$ satisfies the third Gray identity which is given in
Equation (\ref{eqn-3}). Let $Q\in Z(N)$ and let $P=\sigma(Q)$.
Choose coordinates on $M$ so ${}^D\Gamma(P)=0$. We apply Lemma
\ref{lem-3.1}. Since $\tilde{\mathbb{C}}P$ satisfies the third Gray
identity, we conclude ${}^{g_D}\!R$ satisfies the third Gray identity
at $Q$. Thus
\begin{eqnarray*}
&&{}^{g_D}\!R(\partial_{x_i},\partial_{x_j},\partial_{x_k},\partial_{x_{l^\prime}})(Q)=
  {}^{g_D}\!R(\mathfrak{J}\partial_{x_i},\mathfrak{J}\partial_{x_j},
  \mathfrak{J}\partial_{x_k},\mathfrak{J}\partial_{x_{l^\prime}})(Q)\\
&=&{}^{g_D}\!R(\partial_{x_i},\partial_{x_j},\partial_{x_k},-\partial_{x_{l^\prime}})(Q)=
-{}^{g_D}\!R(\partial_{x_i},\partial_{x_j},\partial_{x_k},\partial_{x_{l^\prime}})(Q)\,.
\end{eqnarray*}
Consequently ${}^{g_D}\!R(Q)=0$. By Lemma \ref{lem-3.1}, this implies ${}^D\!R(P)=0$. Since $Q$, and hence $P$, was
arbitrary,
$(M,D)$ is flat.

\subsection{The commutation relation $\mathfrak{J}\,{}^{g_A}\!\!\mathcal{J}(\cdot)={}^{g_A}\!\!\mathcal{J}(\cdot)\mathfrak{J}$}\label{sect-4.4}
The third Gray identity in the complex setting is crucial -- see,
for example, the discussion in \cite{BGG08,DV08,DLV09}; a purely algebraic
computation shows that this condition is equivalent to the condition that
${}^{g_A}\!\!\mathcal{J}(\pi)$ commutes with the almost complex
structure for every complex $2$-plane $\pi$. This computation
extends to show that an almost para-Hermitian manifold
$(A,g_A,\mathfrak{J})$ satisfies the third Gray identity if and only
if
${}^{g_A}\!\!\mathcal{J}(\pi)\mathfrak{J}=\mathfrak{J}\,{}^{g_A}\!\!\mathcal{J}(\pi)$
for all $\pi\in\mathcal{P}(A)$. This completes the proof
of Theorem \ref{thm-1.6}.\hfill\qed


\begin{thebibliography}{99}

\bibitem{BBG}
N. Bla\v zi\'c, N. Bokan, and P. Gilkey, A note on Osserman Lorentzian
manifolds,  {\it Bull. London Math. Soc.} {\bf 29} (1997), 227--230.

\bibitem{BBR}
N. Bla\v zi\'c, N. Bokan, and Z. Raki\'c, Osserman pseudo-Riemannian
manifolds of signature $(2,2)$, {\it J. Aust. Math. Soc.} {\bf 71}
(2001), 367--395.

\bibitem{BGG08} M. Brozos-V\'{a}zquez, E. Garc\'{i}a-R\'{i}o, and P. Gilkey, Relating the curvature tensor and the complex
Jacobi operator of an almost Hermitian manifold,  {\it Adv. Geom.}  {\bf 8}  (2008),  353--365.

\bibitem{BGGNV}
M. Brozos-V\'{a}zquez, E. Garc\'{i}a-R\'{i}o, P. Gilkey, S.
Nik\v{c}evi\'{c}, and R. V\'{a}zquez-Lorenzo, \emph{The geometry of
Walker manifolds}, Synthesis Lectures on Mathematics and Statistics
\textbf{5}, Morgan \& Claypool Publ., 2009.

\bibitem{RiEx}
E. Calvi\~{n}o-Louzao, E. Garc\'{\i}a-R\'{\i}o, P. Gilkey, and R.
V\'{a}zquez-Lorenzo, The geometry of modified Riemannian extensions,
\emph{Proc. R. Soc. A} \textbf{465} (2009), 2023--2040.

\bibitem{Chi}
Q. S. Chi, A curvature characterization  of certain locally rank-one
symmetric spaces, {\it J. Diff. Geom.} {\bf 28} (1988), 187--202.

\bibitem{CMMS03} V. Cort\'es, C. Mayer, T. Mohaupt, and F. Saueressig, Special geometry of Euclidean supersymmetry I: vector
multiplets, {\it arXiv:hep-th/0312001}.

\bibitem{De}
A. Derdzinski, Non-Walker Self-Dual Neutral Einstein Four-Manifolds
of Petrov Type III, \emph{J. Geom. Anal.} \textbf{19} (2009),
301--357.


\bibitem{D}
A. Derdzinski,
Connections with skew-symmetric Ricci tensor on surfaces,
\emph{Results Math.} {\bf 52} (2008), 223--245.


\bibitem{DGV1}
J. C. D\'{\i}az-Ramos, E. Garc\'{\i}a-R\'{\i}o, and R. V\'{a}zquez-Lorenzo,
New examples of Osserman metrics with nondiagonalizable Jacobi operators,
\emph{Differential Geom. Appl.} \textbf{24} (2006), 433--442.


\bibitem{DR-GR-VL-06}
J. C. D\'{\i}az-Ramos, E. Garc\'{\i}a-R\'{\i}o, and R. V\'{a}zquez-Lorenzo,
Four-dimensional Osserman metrics with nondiagonalizable Jacobi
operators, {\it J. Geom. Anal.} \textbf{16} (2006), 39--52.

\bibitem{DV08} A. Di Scala, and L. Vezzoni, Gray Identities, Canonical connection, and integrability,
{\it arXiv:0802.2163v3}.

\bibitem{DLV09} A. Di Scala, J. Lauret, and L. Vezzoni, Quasi-Kaehler Chern-Flat manifolds and complex 2-step nilpotent Lie algebras,
{\it arXiv:0911.56552v2}.



\bibitem{GR-Gi-VA-VL}
E. Garc\'{\i}a-R\'{\i}o, P. Gilkey, M. E. V\'{a}zquez-Abal, and R. V\'{a}zquez-Lorenzo,
Four-dimensional Osserman metrics of neutral signature,
\emph{Pacific J. Math.} \textbf{244} (2010), 21--36.

\bibitem{GKVa}
E. Garc\'{\i}a-R\'{\i}o, D. N. Kupeli, and M. E. V\'{a}zquez-Abal, On a problem of
Osserman in Lorentzian geometry, {\it Differential Geom. Appl.} {\bf
7} (1997), 85--100.

\bibitem{GKVV}
E. Garc\'{\i}a-R\'{\i}o, D. N. Kupeli, M. E. V\'{a}zquez-Abal, and  R. V\'{a}zquez-Lorenzo,
Affine Osserman connections and their Riemann extensions,
\emph{Differential Geom. Appl.} \textbf{11} (1999), 145--153.

\bibitem{GKV}
E. Garc\'{\i}a-R\'{\i}o, D. N. Kupeli, and  R. V\'{a}zquez-Lorenzo,
\emph{Osserman manifolds in semi-Riemannian geometry},
Lecture Notes in Math. \textbf{1777}, Springer, Berlin, 2002.


\bibitem{GVV}
E. Garc\'{\i}a-R\'{\i}o, M. E. V\'{a}zquez-Abal, and  R.
V\'{a}zquez-Lorenzo, Nonsymmetric Osserman pseudo-Riemannian
manifolds, \emph{Proc. Amer. Math. Soc.} \textbf{126} (1998),
2771--2778.


\bibitem{GV-GD}
E. Garc\'{\i}a-R\'{\i}o,  and  R. V\'{a}zquez-Lorenzo, Four-dimensional Osserman symmetric spaces,
\emph{Geom. Dedicata} \textbf{88} (2001), 147--151.


\bibitem{Gilkey1}
P. Gilkey, \emph{Geometric properties of natural operators defined
by the Riemannian curvature tensor}, World Scientific Publishing
Co., Inc., River Edge, NJ, 2001.

\bibitem{G-I}
P. Gilkey, and R. Ivanova, Spacelike Jordan-Osserman algebraic curvature
tensors in the higher signature setting, {\it Differential Geometry,
Valencia, 2001}, 179--186, World Sci. Publ., River Edge, NJ, 2002.

\bibitem{G-I-2}
P. Gilkey, and R. Ivanova, The Jordan normal form of Osserman algebraic
curvature tensors, {\it Results Math.} {\bf 40} (2001), 192--204.


\bibitem{G-I-S}
P. Gilkey,  R. Ivanova, and I. Stavrov, Jordan Szab\'{o} algebraic covariant
derivative curvature tensors, {\it Recent advances in Riemannian and
Lorentzian geometries (Baltimore, MD, 2003)}, 65--75, Contemp.
Math., {\bf 337}, Amer. Math. Soc., Providence, RI, 2003.

\bibitem{IP} S. Ivanov, and I. Petrova, Riemannian manifold in
which the skew-symmetric curvature operator has pointwise constant eigenvalues, {\it Geom. Dedicata}
{\bf 70} (1998), 269--282.

\bibitem{Ni1}
Y. Nikolayevsky, Osserman manifolds of dimension $8$, {\it
Manuscripta Math.} {\bf 115} (2004), 31--53.

\bibitem{Ni2}
Y. Nikolayevsky, Osserman conjecture in dimension $\neq 8, 16$, {\it
Math. Ann.} {\bf 331} (2005), 505--522.



\bibitem{PWalker}
E. M. Patterson, and A. G. Walker, Riemann extensions, \emph{Quart. J.
Math., Oxford Ser. (2)} \textbf{3} (1952), 19--28.

\bibitem{SV92}
G. Stanilov, and V. Videv, On a generalization of the
Jacobi operator in the Riemannian geometry,  {\it Annuaire Univ. Sofia Fac. Math.
Inform.} {\bf 86} (1992), 27--34.



\bibitem{W-50}
A. G. Walker, Canonical form for a Riemannian space with a parallel
field of null planes, \emph{Quart. J. Math. Oxford (2)} \textbf{1}
(1950), 69--79.

\end{thebibliography}
\end{document}